\newtheorem{theorem}{Theorem}[section]
\newtheorem{lemma}[theorem]{Lemma}
\newtheorem{corollary}[theorem]{Corollary}
\newtheorem{proposition}[theorem]{Proposition}
\theoremstyle{definition}
\newtheorem{remark}[theorem]{Remark}
\newcommand{\cA}{\mathcal{A}}
\newcommand{\cE}{\mathcal{E}}
\newcommand{\cN}{\mathcal{N}}
\newcommand{\cM}{\mathcal{M}}
\newcommand{\lin}[1]{\mathcal{L}(#1)}
\newcommand{\csp}[1]{\mathcal{C}_c(#1)}
\newcommand{\CrKS}[1]{C^*_{r,\mathrm{KS}}(#1)}
\newcommand{\CKS}[1]{C^*_{\mathrm{KS}}(#1)}
\newcommand{\CrAlt}[1]{C^*_{r,\mathrm{alt}}(#1)}
\newcommand{\fEKS}{\mathfrak{E}_{\rm KS}}
\newcommand{\fEr}{\mathfrak{E}_{\rm r}}
\newcommand{\fE}{\mathfrak{E}}
\begin{document}

\title[Inverse semigroup Fell bundles and regular representations]{On Fell bundles over inverse semigroups and their left regular representations}
\author{Erik B{\'e}dos}
\address{Erik B{\'e}dos, Institute of Mathematics, University of Oslo, P.b.~1053 Blindern, 0316 Oslo, Norway.}
\email{bedos@math.uio.no}
\author{Magnus D. Norling}
\address{Magnus D. Norling, Oslo, 
Norway.}
\email{magnus.dahler.norling@gmail.com}
\subjclass[2010]{Primary 46L55; Secondary 46L05, 20M18}
\keywords{Fell bundles, inverse semigroups, regular representation, cross sectional $C^*$-algebras}
\begin{abstract}
We prove a version of Wordingham's theorem for left regular representations in the setting of Fell bundles of inverse semigroups and use this result to discuss the various associated cross sectional $C^*$-algebras.  
\end{abstract}
\date{\today}

\maketitle
\section{Introduction}

\medskip Following an unpublished work of Sieben, the concept of  a Fell bundle over a discrete group was generalized by Exel in \cite{exel11}, where the notion of a Fell bundle $\cA = \{A_s\}_{s\in S}$ over an inverse semigroup $S$ was introduced and the associated full cross sectional $C^*$-algebra $C^*(\cA)$ was defined as the universal $C^*$-algebra for $C^*$-algebraic representations of $\cA$. 
This construction may be used to present other classes of  $C^*$-algebras in a unified manner. For example, Buss and Exel  show in \cite{buss_exel11} that to each partial action $\beta$ of an inverse semigroup $S$ on a C$^*$-algebra $A$,  one may associate a Fell bundle $\cA_\beta$ over $S$ such that   
$C^*(\cA_\beta)$ is naturally isomorphic to the full crossed product of $A$ by $\beta$. (In fact, Buss and Exel even consider twisted partial actions.)
 In another direction,  the same authors establish in 
 \cite{buss_exel_12a} that any Fell bundle $\mathcal{B}$ over an {\'e}tale groupoid $\mathcal{G}$ gives rise to a Fell bundle $\mathcal{A}$ over $S$, where $S$ is any inverse semigroup consisting of bisections (or slices) of $\mathcal{G}$ (as defined by Renault in \cite{re80}),  and show that, under some mild assumptions, the full cross sectional C$^*$-algebra of $\mathcal{B}$ is isomorphic to $C^*(\cA)$. 
 
 Exel also defines in \cite{exel11} the reduced cross sectional $C^*$-algebra  $C^*_r(\cA)$ associated to a Fell bundle $\cA$ over an inverse semigroup $S$. One drawback of his construction is that it is somewhat involved (we summarize it in section \ref{sec:exelreduced}). Another approach has recently been proposed in   \cite{buss_exel_meyer15} when $\cA$ is saturated and $S$ is unital, relying on the result from \cite{buss_meyer14} that $\cA$ may then be identified with an action of $S$ by Hilbert bimodules on the unit fibre $A$ of $\cA$, thus making it possible to define $C^*_r(\cA)$ as the reduced crossed product of $A$ by this action of $S$.  We believe that it would be helpful to find a more direct construction of $C^*_r(\cA)$, at least in some cases. For example, such a construction would make it easier to initiate a study of amenability for Fell bundles over inverse semigroups, having in mind that, 
 as for Fell bundles over groups \cite{exel14, exel97}, a natural definition of  amenability for $\cA$   is to require that the canonical $*$-homomorphism from $C^*(\cA)$ onto $C_r^*(\cA)$ is injective. 

 Given a Fell bundle $\cA$ over an inverse semigroup $S$, our main goal in the present paper is to introduce  a  certain $C^*$-algebra  $C^*_{\rm{r, alt}}(\cA)$  which may also be considered as
 a kind of reduced cross sectional C$^*$-algebra for $\cA$,
 and to compare it with Exel's $C_r^*(\cA)$.  
 Inspired by the approach used by 
 Khoshkam and Skandalis \cite{khoshkam_skandalis04} in the case of an action of an inverse semigroup on a $C^*$-algebra, 
 our first step (see Section \ref{Fellb}) is to associate  
 to $\cA$ a full cross sectional $C^*$-algebra $C^*_{\rm KS}(\cA)$ which is universal for so-called pre-representations of $\cA$ in $C^*$-algebras, or, equivalently, for $C^*$-algebraic representations of the convolution $*$-algebra $C_c(\cA)$ canonically attached to $\cA$. Exel's $C^*(\cA)$ is then easily obtained as a quotient of $C^*_{\rm KS}(\cA)$. 
  Our next step (see Section \ref{leftregrep}) is to show that $C_c(\cA)$ has a natural injective left regular $C^*$-algebraic representation $\Phi_\Lambda$. 
 The injectivity of $\Phi_\Lambda$ may be seen as an analog of Wordingham's theorem for $\ell^1(S)$ (cf.~\cite{paterson}), and our proof is related to his original proof, although some extra arguments are necessary.  Letting $C_{\rm r, KS}(\cA)$ denote the $C^*$-algebra generated by the range of $\Phi_\Lambda$, $C^*_{\rm{r, alt}}(\cA)$ is then defined as the quotient of $C_{\rm r, KS}(\cA)$ by a certain canonical ideal. From the naturality of our construction, it readily follows that there is a canonical $*$-homomorphism $\Psi_{\Lambda^{\rm alt}}$ from $C^*(\cA)$ onto $C^*_{\rm{r, alt}}(\cA)$.

 In the case where $S$ consists only of idempotents (hence is a semilattice) and $\cE$ is Fell bundle over $S$, we check in Section \ref{semilatt} that $C^*_{\rm KS}(\cE) = C^*_{\rm r, KS}(\cE)$ and $C^*(\cE) = C^*_{\rm{r, alt}}(\cE) \simeq C^*_{\rm{r}}(\cE)$. Next, given a Fell bundle $\cA$ over an inverse semigroup $S$ such that $S$ is $E^*$-unitary (cf.~Section \ref{prem}) and such that  $A_0 =\{0\}$ if $S$ has a 0 element, we let $\cE$ denote the Fell bundle obtained by restricting $\cA$ to the semilattice $E$ of idempotents in $S$ and  show (in Section \ref{cond-exp-diag}) that there exists a faithful conditional expectation from  $C^*_{\rm r, KS}(\cA)$  onto $C^*_{\rm r, KS}(\cE)$. In the final section, keeping the same assumptions, we describe $C_r^*(\cA)$ as a quotient of $C_{\rm r, KS}(\cA)$ and show that there exists a surjective canonical $*$-homomorphism $\Psi'$ from $C^*_{\rm{r, alt}}(\cA)$ onto $C_r^*(\cA)$. We also   
characterize when $\Psi'$ is a $*$-isomorphism and end by showing that this happens frequently when $S$ is strongly $E^*$-unitary. 
  
 \section{Preliminaries}\label{prem}
 We recall that a semigroup is a set equipped with an associative binary operation, while  a monoid is a semigroup with an identity. A commutative idempotent semigroup is called a semilattice.  
 We also recall that an inverse semigroup is a semigroup $S$ where for each $s\in S$ there is a unique $s^* \in S$ satisfying
\[
 ss^*s= s\mbox{ and }s^*ss^* = s^*.
\]
The map $s\mapsto s^*$ is then an involution on $S$. Every inverse semigroup $S$ contains a canonical semilattice, namely
\[E(S)=\{e\in S:e^2=e\}\] satisfying $E(S) = \{e\in S:e^2=e = e^*\} = \{s^*s:s\in S\}=\{tt^*:t\in S\}.$ Throughout this article,  $S$ will be a fixed inverse semigroup and  $E=E(S)$ will denote its semilattice of idempotents. We refer to \cite{lawson} and \cite{paterson} for the basics of the theory of inverse semigroups. We recall below a few facts that we will need later.

There is a natural partial order relation $\leq$ on $S$ given by 
$s\leq t$ if and only if $s=et$ for some $e\in E$, if and only if $s=tf$ for some $f\in E$, where $e$ may be chosen to be $ss^*$, and $f$ to be $s^*s$.  For $e,f \in E$, we have $e \leq f$ if and only if $e = ef$. 

Many inverse semigroups have a zero, that is, an element $0$  satisfying $0s = s0 = 0$ for all $s\in S$. Such an element is  necessarily unique and lies in $E$. If $S$ has a zero, we set $S^\times = S\setminus \{0\}$ and $E^\times = E\setminus \{0\}$. Otherwise, we set $S^\times = S$ and $E^\times = E$.

We will say that $S$ is \emph{$E^*$-unitary} if the set $\{ s\in S: e\leq s\}$ is contained in $E$ for every $e \in E^\times$. If this holds for every $e \in E$, then $S$ is called \emph{$E$-unitary}. These two concepts clearly coincide if $S$ does not have a zero. For inverse semigroups having a zero, $E$-unitarity is a too strong requirement, only satisfied by semilattices. We note that $E^*$-unitarity is usually only defined for inverse semigroups having a zero, in which case it is sometimes called $0$-$E$-unitarity (and is defined as above). Our use of terminology will allow us to unify some statements.  The class of $E^*$-unitary inverse semigroups has by far been the one who has received most attention from $C^*$-algebraists, most probably because they are easier to handle.

We will also need to refer to a stronger form of $E^*$-unitarity. 
We recall that a map $\sigma$ from $S$ into a group with identity $1$ is called a \emph{grading}  if $\sigma(st)=\sigma(s)\sigma(t)$ whenever $s, t \in S$ and $st\in S^\times$, and that $\sigma$ is said to be \emph{idempotent pure} if $\sigma^{-1}(\{1\})=E$. Then $S$ is said to be \emph{strongly $E^*$-unitary} \cite{fleming_fountain_gould99} 
if there exists an idempotent pure grading from $S$ into some group. It is known that  $S$ is  $E^*$-unitary whenever it is strongly $E^*$-unitary. 
   
A semigroup homomorphism from $S$ into another inverse semigroup is necessarily  $*$-preserving, so this provides the natural notion of homomorphism between inverse semigroups. 
If $X$ is a set, then $\mathcal{I}(X)$ will denote the symmetric inverse semigroup on $X$, consisting of all partial bijections on $X$ (with composition defined on the largest possible domain). An \emph{action of $S$ on $X$} is then a homomorphism of $S$ into $\mathcal{I}(X)$. The essence of the Wagner-Preston theorem is that there always exists an injective action of $S$ on some set.

 When $A$ is a $C^*$-algebra, we will let ${\rm PAut}(A)$ denote the inverse subsemigroup of $\mathcal{I}(A)$ consisting of all partial $*$-automorphisms of $A$; so $\phi \in {\rm PAut}(A)$ if and only if $\phi$ is a $*$-isomorphism between two  ideals of $A$. Here, and in the sequel, ideals in $C^*$-algebras are always assumed to be two-sided and closed, unless otherwise specified. An \emph{action of $S$ on a $C^*$-algebra $A$} is an homomorphism $\alpha$ from $S$ into ${\rm PAut}(A)$. 
 Khoshkam and Skandalis show in \cite{khoshkam_skandalis04} how to associate to such an action a full (resp.~reduced) $C^*$-crossed product, which we will denote by $A\rtimes^{\rm KS}_{\alpha} S$ (resp.~$A\rtimes^{\rm KS}_{\alpha, r} S$).  In fact, their construction goes through for a more general kind of action of $S$ on $A$, and the interested reader should consult \cite{khoshkam_skandalis04} for more details, including a discussion of the relationship between their full crossed product and the crossed product construction previously introduced by Sieben in \cite{sieben}.
 
Following \cite{buss_exel12}, one may define \emph{partial} actions of $S$ on sets and on $C^*$-algebras. As mentioned in the introduction, Buss and Exel actually consider  \emph{twisted} partial actions of $S$ in \cite{buss_exel12}, but we will restrict ourselves to the untwisted case to avoid many technicalities.  Partial actions were first introduced in the case where $S$ is a group, and the reader may consult \cite{exel14} for a nice introduction to this subject, including many references to the literature. We recall a few relevant definitions and facts from \cite{buss_exel12}. 
 A \emph{partial homomorphism} of $S$ in a semigroup $H$ is a map $\pi: S\mapsto H$ such that 
\begin{enumerate}
\item $\pi(s)\pi(t)\pi(t^*) = \pi(st)\pi(t^*)$,
\item  $\pi(s^*)\pi(s)\pi(t) = \pi(s^*)\pi(st)$,
\item $\pi(s)\pi(s^*)\pi(s) = \pi(s)$
\end{enumerate}
hold for all $s,t\in S$. Note that if $H$ is an inverse semigroup, then (iii) implies 
\begin{enumerate}\setcounter{enumi}{3}
\item $\pi(s^*)=\pi(s)^*$
\end{enumerate}
for all $s\in S$; hence, in this case, $\pi$ is a partial homomorphism if and only if (i), (ii) and (iv) hold. Moreover, still assuming that $H$ is an inverse semigroup,  this is equivalent to requiring that the three conditions 
\begin{itemize}
\item[(a)] $\pi(s^*)=\pi(s)^*$,
\item[(b)] $\pi(s)\pi(t)\leq \pi(st)$,
\item[(c)] $\pi(s)\leq \pi(t)$ \ whenever $s\leq t$,
\end{itemize}
hold for $s,t \in S$, cf.\ \cite[Proposition 3.1]{buss_exel12}. 

A \emph{partial action} of $S$ on a set $X$ (resp.~on a $C^*$-algebra $A$) is then defined as a partial homomorphism $\beta$ from $S$ into $\mathcal{I}(X)$ (resp.~ into ${\rm PAut}(A)$).  As in \cite{buss_exel12}, we will also require that a partial action $\beta$ of $S$ on a $C^*$-algebra $A$ satisfies that the union $\cup_{s\in S} J_s = \cup_{s\in S}\, {\rm im}({\beta_s})$ spans a dense subspace of $A$.

\section{Fell bundles over inverse semigroups} \label{Fellb}
 In \cite{exel11}, Exel defines a Fell bundle over $S$ as a quadruple
\[
 \cA = \big(\{A_s\}_{s\in S} , \{\mu_{s,t}\}_{s,t\in S} , \{*_s\}_{s\in S} , \{j_{t,s}\}_{s,t\in S, s\leq t}\big)
\]
where for $s,t\in S$ we have that
\begin{enumerate}
 \item $A_s$ is a complex Banach space,
 \item $\mu_{s,t}:A_s\odot A_t\to A_{st}$ is a linear map,
 \item $*_s:A_s\to A_{s^*}$ is a conjugate linear isometric map,
 \item $j_{t,s}:A_s\hookrightarrow A_t$ is a linear isometric 
 map whenever $s\leq t$.
\end{enumerate}
It is moreover required that for every $r,s,t\in S$, and every $a\in A_r,b\in A_s$, and $c\in A_t$, we have
\begin{enumerate}\setcounter{enumi}{4}
 \item $\mu_{rs,t}(\mu_{r,s}(a\otimes b)\otimes c)=\mu_{r,st}(a\otimes\mu_{s,t}(b\otimes c))$,
 \item $*_{rs}(\mu_{r,s}(a\otimes b))=\mu_{s^*,r^*}(*_s(b)\otimes *_r(a))$,
 \item $*_{s^*}(*_s(a))=a$,
 \item $\|\mu_{r,s}(a\otimes b)\|\leq\|a\| \, \|b\|$,
 \item $\|\mu_{r^*,r}(*_r(a)\otimes a)\| = \|a\|^2$,
 \item $\mu_{r^*,r}(*_r(a)\otimes a) \geq 0$ in $A_{r^* r}$,
 \item if $r\leq s\leq t$, then $j_{t,r}=j_{t,s}\circ j_{s,r}$,
 \item  if $r\leq r'$ and $s\leq s'$, then $j_{r's',rs}\circ\mu_{r,s}=\mu_{r',s'}\circ (j_{r',r}\otimes j_{s',s})$ and $j_{s',s}\circ *_s = *_{s'}\circ j_{s',s}$.
\end{enumerate}
As shown by Exel, axioms (i)-(ix) imply that $A_e$ is a $C^*$-algebra whenever $e \in E$, with $cd= \mu_{e,e}(c \otimes d)$ and $c^*= *_e(c)$ for $c,d \in A_e$.
 Hence, the requirement in axiom (x) is meaningful. Exel also shows that the following properties hold:
\begin{enumerate}
 \item[(xiii)] $j_{s,s}$ is the identity map ${\rm id}_{A_s}$ for every $s\in S$;
 \item[(xiv)] If $e,f\in E$ and $e\leq f$, then $j_{f,e}(A_e)$ is an 
 ideal in $A_f$.
\end{enumerate}

When no confusion is possible, we will use the simplified notation
\[
 a\cdot b:=\mu_{s,t}(a\otimes b)\mbox{ and }a^*:=*_s(a)
\]
whenever $a\in A_s$ and $b\in A_t$, and just write  $\cA = (\{A_s\}_{s\in S} , \{j_{t,s}\}_{s,t\in S,\, s\leq t})$, or  even only $\cA = \{A_s\}_{s\in S}$ in some cases. If $s \in S$ and $e:= s^*s \in E$, then one easily verifies that $A_s$ becomes a right Hilbert $A_e$-module with respect to the right action given by $(a_s, a_e)\mapsto a_s \cdot a_e \in A_{ss^*s} = A_s$ for $a_s \in A_s$ and $a_e \in A_e$,  and the $A_e$-valued inner product given by $\langle a_s, b_s \rangle = a_s^* \cdot b_s$ for $a_s, b_s \in A_s$.
For later use, we also note the following fact: 
\begin{equation}\label{a-e-f} 
\text{If } e, f\in E, e\leq f, \text{and }c,d \in A_e, \text{then } j_{f,e}(c)\cdot d = cd\,.
\end{equation} 
Indeed, using axioms (xii) and (xiii), we get 
\[j_{f,e}(c)\cdot d = j_{f,e}(c)\cdot j_{e,e}(d) = j_{fe,ee}(c\cdot d) = j_{e,e}(c \cdot d) = c\cdot d = cd\,.\]

We also recall that a Fell bundle $\cA$ is called \emph{saturated} when the span of $\{a_s\cdot b_t : a_s\in A_s, b_t\in A_t\}$ is dense in $A_{st}$ for all $s,t \in S$.

\subsection{}\label{FB_PA}
An important class of examples of Fell bundles over inverse semigroups arises from (twisted) partial actions of inverse semigroups on $C^*$-algebras (cf.~\cite[Section 8]{buss_exel12}). For the ease of the reader, we sketch this construction in the untwisted case.

Let $\beta:S\to {\rm PAut}(A)$ be a partial action of $S$ on a $C^*$-algebra $A$. For each $s\in S$, set $J_{s^*}={\rm dom}(\beta_s)$ and $J_s={\rm im}(\beta_s)$, so $\beta_s$ is a $*$-isomorphism from $J_{s^*}$ onto $J_s$ and $\beta_s^{-1} = \beta_{s^*}$. One may then show (cf.~\cite[Proposition 3.4, Proposition 3.8 and Proposition 6.3]{buss_exel12}) that the family $\{J_s\}$ satisfy certain compatibility properties, such as $\beta_s(J_{s^*} \cap J_t) = J_s \cap J_{st}\,$,\, $J_s \subset J_{ss^*}$ (and $J_s = J_{ss^*}$ if $\beta$ is an action),\, $J_s \subset J_t$ whenever $s\leq t$, and $\beta_e = {\rm id}_{J_e}$ when $e\in E$.

Now, for each $s\in S$, we set $A_s= \{(a, s): a\in J_s\}$ and organize $A_s$ as a Banach space by identifying $a\in J_s$ with $(a,s)\in A_s$. Note that if $s,t \in S$, $a \in J_s$ and $b\in J_t$, then 
\[\beta_s^{-1}(a)\,b \in J_{s^*} \cap J_t\,, \,\text{so} \, \,\beta_s\Big(\beta_s^{-1}(a)b\Big)  \in J_s \cap J_{st}\,,\, \]
\[\text{and}\,\, a^* \in J_s\,,\, \text{so} \, \,\beta_s^{-1}(a^*) \in J_{s^*}\,;\]  
hence one may define $\mu_{s,t}\big((a,s)\otimes (b,t)\big)=:(a,s) \cdot (b,t)$ and $*_s(a,s)=: (a,s)^*$ by 
\[ (a,s) \cdot (b,t) = \Big(\beta_s\big(\beta_s^{-1}(a)b\big), st\Big) \in A_{st} \, \, \text{and} \,\, (a,s)^* = \big(\beta_s^{-1}(a^*), s^*) \in A_{s^*}\,.\]
Moreover, if $s\leq t$\,, then $\beta_s \leq \beta_t$\,, so $J_s = {\rm im}(\beta_s) \subset {\rm im}(\beta_t) = J_t $, and one may then define $j_{t,s}: A_s\to A_t$ by 
\[j_{t,s} (a,s) = (a, t)\,, \quad \text{for all}\,\,  a \in J_s\,.\]
It may then be checked that $\cA = (\{A_s\}_{s\in S}, \{j_{t,s}\}_{s,t\in S,\, s\leq t})$ becomes a Fell bundle over $S$ with respect to these operations (cf.\ \cite{buss_exel11} for the case of a global (twisted) action).

\subsection{} \label{exelfull} Still following \cite{exel11}, a \emph{pre-representation} of a Fell bundle \[\cA=(\{A_s\}_{s\in S} , \{j_{t,s}\}_{s,t\in S,\, s\leq t})\] in a complex $*$-algebra $B$ is a family $\Pi=\{\pi_s\}_{s\in S}$, where for each $s\in S$,
\[
 \pi_s:A_s\to B
\]
is a linear map such that for all $s,t\in S$, all $a\in A_s$, and all $b\in A_t$, one has
\begin{enumerate}
 \item[(a)] $\pi_{st}(a\cdot b)=\pi_s(a)\pi_t(b)$, 
 
 \smallskip
 \item[(b)] $\pi_{s^*}(a^*)=\pi_s(a)^*$.
\end{enumerate}
If, in addition,  $\Pi$ satisfies
\begin{enumerate}
 \item[(c)] $\pi_t\circ j_{t,s}=\pi_s$ whenever $s\leq t$,
\end{enumerate}
then  $\Pi$ is called a {\it representation} of $\cA$ in $B$. 

We recall that if $\Pi$ is a pre-representation of $\cA$ in a $C^*$-algebra $B$, then for each $s\in S$ and $a\in A_s$, we have $\|\pi_s(a)\| \leq \|a\|$. Indeed, as $\pi_e:A_e\mapsto B$ is then a $*$-homomorphism between $C^*$-algebras for every $e \in E$,  we get
\[\|\pi_s(a)\|^2 = \|\pi_s(a)^*\pi_s(a)\| = \|\pi_{s^*}(a^*)\pi_s(a)\| = \|\pi_{s^*s}(a^*\cdot \,a)\| \leq \|a^*\cdot \,a\| =\|a\|^2.\]  

\medskip Consider now the direct sum of vector spaces 
\[
 \csp{\cA} = \bigoplus_{s\in S} A_s
\]
We will often write an element $g \in   \csp{\cA}$
as a formal sum
$
 g=\sum_{s\in S}a_s\delta_s
$
where $a_s \in A_s$ for $s\in S$ and all but finitely many $a_s$ are equal to $0$. Then $\csp{\cA}$ can be given the structure of a complex $*$-algebra by extending linearly the operations
\begin{align*}
 (a_s\delta_s)(b_t\delta_t) &= (a_s\cdot b_t)\delta_{st}\\ 
 (a_s\delta_s)^* &= a_s^*\delta_{s^*}
\end{align*}
Alternatively, if one prefers to write $\csp{\cA}$ as 
\[\csp{\cA}=\Big\{ g\in \prod_{s\in S} A_s :  g(s) = 0 \,\,\text{for all but finitely many}\, s\Big\},\]
one may define the product and the involution on $\csp{\cA}$ by
\[(f\ast g)(r) = \sum_{s,\, t \in S,\, st \,=\, r } \,f(s)\cdot g(t)\quad \text{and} \quad f^*(r) = f(r^*)^*\] 
for $f, g \in \csp{\cA}$ and $r\in S$.

For each $s\in S$, let $\pi^0_s:A_s \to \csp{\cA}$ be defined by $$\pi^0_s(a_s) = a_s \delta_s$$ for each $a_s \in A_s$. Then $\Pi^0:=\big\{\pi^0_s\big\}_{s\in S}$ is a pre-representation of $\cA$ in $\csp{\cA}$, which satisfies the following universal property (cf.~\cite[Proposition 3.7]{exel11}):

 To each pre-representation $\Pi=\{\pi_s\}_{s\in S}$ of $\cA$ in a $*$-algebra $B$ one may associate a $*$-homomorphism $\Phi_\Pi:\csp{\cA}\to B$ given by
\[
 \Phi_\Pi\Big(\sum_{s\in S}a_s\delta_s\Big)=\sum_{s\in S}\pi_s(a_s)\,,
\]
which satisfies  $\Phi_\Pi\,\circ\, \pi_s^0 = \pi_s$ for all $s\in S$. Moreover, the map $\Pi \mapsto \Phi_\Pi$ gives a bijection between pre-representations of $\cA$ in $B$ and $*$-homomophisms from $\csp{\cA}$ into $B$. 

Consider $g=\sum_{s\in S}a_s\delta_s \in \csp{\cA}$. If $B$ is a $C^*$-algebra and $\Pi$ is a pre-representation of $\cA$ in $B$, then we have
\[\|\Phi_\Pi(g)\| = \big\|\sum_{s\in S}\pi_s(a_s)\big\| \leq \sum_{s\in S}\|\pi_s(a_s)\| \leq \sum_{s\in S}\,\|a_s\|\,.\]
Hence, if we define
 \[\|g\|_{\rm u} := \sup_\Phi \big\{ \|\Phi(g)\| \}\,,\] 
 where the supremum is taken over all  $*$-homomorphisms  from $\csp{\cA}$ into any $C^*$-algebra, then 
\begin{align*}
\|g\|_{\rm u} &= \sup\big\{ \|\Phi_\Pi(g)\| : 
\Pi \ \text{is a pre-representation of $\cA$ in some $C^*$-algebra}\big\} \\
&\leq \, \sum_{s\in S}\,\|a_s\| \,< \,\infty\,.
\end{align*}
Hence $\|\cdot\|_{\rm u}$ gives a $C^*$-seminorm on $\csp{\cA}$. As we will show in the next section, there always exists an injective $*$-representation of $\csp{\cA}$ 
in some $C^*$-algebra
(namely the one associated to the left regular representation of $\csp{\cA}$). 
It follows that   $\|\cdot\|_{\rm u}$ is in fact a $C^*$-norm, and we  
may therefore define the \emph{full KS-cross sectional $C^*$-algebra of $\cA$}, denoted by $C^{*}_{\rm KS}(\cA)$,  as the completion of  $\csp{\cA}$ w.r.t.~$\|\cdot\|_{\rm u}$. 

We will use the same notation to denote the norm on $C^{*}_{\rm KS}(\cA)$ and will identify $\csp{\cA}$ with its canonical copy in $C^{*}_{\rm KS}(\cA)$. We may therefore regard $\Pi^0$ as a pre-representation of $\cA$ in $C^{*}_{\rm KS}(\cA)$, which is universal in the sense that
given any  pre-representation $\Pi$ of $\cA$ in a $C^*$-algebra $B$, then
 there exists a unique $*$-homomorphism from $C^{*}_{\rm KS}(\cA)$ into $B$, which we also denote by $\Phi_{\Pi}$, satisfying $\Phi_\Pi\, \circ\, \pi^0_s= \pi_s$ for all $s\in S$.

If for example $\cA_\alpha$ denotes the Fell bundle associated to an action $\alpha$ of $S$ on a $C^*$-algebra $A$, then it is straightforward to verify that $C^{*}_{\rm KS}\big(\cA_\alpha\big)$ coincides with the full KS-crossed product $A\rtimes^{\rm KS}_\alpha S$ constructed in \cite{khoshkam_skandalis04}. 
 Thus, if  $\beta$ is a partial action of $S$ on a $C^*$-algebra $A$,  it is natural to define     
 the \emph{full KS-crossed product}  
 by $A\rtimes^{\rm KS}_\beta S:=C^{*}_{\rm KS}\big(\cA_\beta\big),$ where $\cA_\beta$ denotes the Fell bundle over $S$ associated to $\beta$ in \ref{FB_PA}.

\subsection{}\label{sec:fullcross} In \cite{exel11}, Exel defines the full cross sectional $C^*$-algebra $C^{*}(\cA)$  of  a Fell bundle 
$\cA = \big(\{A_s\}_{s\in S}, \{j_{t,s}\}_{s,t\in S,\, s\leq t}\big)$.
 This algebra may be described as a quotient of $C^{*}_{\rm KS}(\cA)$. To explain this, we first have to review Exel's construction. Let $\cN_\cA$ denote the subspace of $\csp{\cA}$ spanned by the set
\[
 \Big\{a_s\delta_s-j_{t,s}(a_s)\delta_t:s,t\in S, s\leq t, a_s\in A_s\Big\}.
\]
Exel shows in \cite[Proposition 3.9]{exel11} that $\cN_\cA$ is a two-sided selfadjoint ideal of $\csp{\cA}$. It follows that $\csp{\cA}/\cN_\cA$ becomes a complex $*$-algebra in the obvious way. Moreover, \cite[Proposition 3.10]{exel11} says that if $\Pi$ is a pre-representation of $\cA$ in a $*$-algebra $B$, then $\Pi$ is a representation of $\cA$ if and only if $\Phi_\Pi$ vanishes on $\cN_\cA$, in which case 
we will denote the associated $*$-homomorphism from $\csp{\cA}/\cN_\cA$ into $B$ by $\widetilde\Phi_\Pi$. The map $\Pi\mapsto \widetilde\Phi_\Pi$ gives then a bijection between representations of $\cA$ in $B$ and $*$-homomorphisms from $\csp{\cA}/\cN_\cA$ into $B$. 

Now, for any $g=\sum_{s\in S}a_s\delta_s \in \csp{\cA}$ and any  representation $\Pi$ of $\cA$ in a $C^*$-algebra, we have
\[ \|\widetilde\Phi_\Pi(g+ \cN_\cA)\| = \|\Phi_\Pi(g)\| \leq \|g\|_{\rm u} \,.\]
It follows that 
if we define
\[ \|g+ \cN_\cA\|_{*}:=\sup_\Psi \big\{ \|\Psi\big(g+ \cN_\cA\big)\|\}\] where the supremum is taken over
 all $*$-homomorphisms $\Psi$ from $\csp{\cA}/\cN_\cA$ into a $C^*$-algebra, we get
 \begin{align*} 
 \|g+ \cN_\cA\|_{*}&= \sup\big\{ \|\Phi_\Pi(g)\| : 
\Pi \ \text{is a representation of $\cA$ in some $C^*$-algebra}\big\} \\
&\leq \|g\|_{\rm u} \,,
\end{align*}
so $\|\cdot\|_{*}$ gives a $C^*$-seminorm  on $\csp{\cA}/\cN_\cA$.  The  \emph{full $($Exel$)$ cross sectional $C^*$-algebra $C^*(\cA)$} is then defined  as the Hausdorff completion of $\csp{\cA}/\cN_\cA$ w.r.t.~to this seminorm. 

Letting $Q_{\cA}:\csp{\cA}\to \csp{\cA}/\cN_\cA$ denote the quotient map and $R_{\cA}:\csp{\cA}/\cN_\cA\to C^*(\cA)$ denote the canonical map, we get that $\iota_{\cA}:=R_{\cA}\,\circ \,Q_{\cA}$ is a contractive $*$-homomorphism from $\csp{\cA}$ into $C^*(\cA)$ having dense range. 
Thus, $\iota_{\cA}$ extends to a $*$-homomorphism $q_{\cA}$ from $C^{*}_{\rm KS}(\cA)$ onto $C^*(\cA)$ such that
\begin{equation}\label{fullcross}
C^*(\cA) \simeq \,C^{*}_{\rm KS}(\cA)\, / \,{\rm Ker} \, q_{\cA}\,.
\end{equation}
 Now, for each $s\in S$, define $\pi^{\cA}_s: A_s \to C^*(\cA)$ by \[\pi^{\cA}_s = q_{\cA}\, \circ \, \pi^{0}_s \ (=\iota_{\cA}\, \circ \, \pi^{0}_s)\,.\] Then one checks (cf.~\cite[Proposition 3.12]{exel11} and the proof of \cite[Proposition 3.13]{exel11}) that \[\Pi^{\cA}:=\big\{\pi^{\cA}_s\big\}_{s\in S}\] is a representation of $\cA$ in $C^*(\cA)$ satisfying the following universal property: given any representation $\Pi=\{\pi_s\}_{s\in S}$ of $\cA$ in a $C^*$-algebra $B$, there exists a unique $*$-homomorphism $\Psi_\Pi: C^*(\cA)\to B$ such that 
\,$\Psi_\Pi \circ\, \pi^{\cA}_s = \pi_s$ for all $s\in S$. It  follows immediately that $\Phi_\Pi = \Psi_\Pi \, \circ \,q_{\cA}$ for every such representation $\Pi$. 

The ideal $\,{\rm Ker} \, q_{\cA}$ has a natural description in terms of $\cN_\cA$. Indeed, letting $\cM_\cA$ denote the ideal of $C^{*}_{\rm KS}(\cA)$ given by $\cM_\cA:=\overline{\cN_\cA}^{\,\|\cdot\|_{\rm u}}$, we have \[\,{\rm Ker} \, q_{\cA} = \cM_\cA.\] 
 To prove this, we first note that since $q_{\cA}(\cN_\cA) = \iota_{\cA}(\cN_\cA) = \{0\} $, we have \[\cM_\cA \subset  \,{\rm Ker} \, q_{\cA}\,.\] Next, let $s\in S$ and
define $\omega_s:A_s \to C^{*}_{\rm KS}(\cA)/\cM_\cA$ by 
\[\omega_s (a_s) := \pi^0_s(a_s) + \cM_\cA \] for every $a_s \in A_s$. As $\cM_\cA$  contains  
 $\cN_\cA$, one easily verifies that  $\Omega=\{\omega_s\}_{s\in S}$ is  a representation of $\cA$ in $C^{*}_{\rm KS}(\cA)/\cM_\cA$.
The associated $*$-homomorphism $\Phi_\Omega$ from $C^{*}_{\rm KS}(\cA)$ into $C^{*}_{\rm KS}(\cA)/\cM_\cA$ is then nothing but the quotient map. Since $\Phi_\Omega = \Psi_\Omega \, \circ \,q_{\cA}$, it follows that  
\[{\rm Ker} \, q_{\cA} \subset {\rm Ker}\, \Phi_\Omega = \cM_\cA\,.\]
Thus we get  $\,{\rm Ker} \, q_{\cA} = \cM_\cA$, as desired. It follows that  
\begin{equation}\label{fullcross2}
C^*(\cA) \simeq \,C^{*}_{\rm KS}(\cA)\, / \cM_\cA\,.
\end{equation}

\subsection{}\label{sec:exelreduced}

In \cite{exel11}, Exel also constructs the reduced cross sectional $C^*$-algebra $C_r^{*}(\cA)$  of a Fell bundle $\cA = (\{A_s\}_{s\in S}, \{j_{t,s}\}_{s,t\in S,\, s\leq t})$. His construction, which is somewhat involved, may be summarized as follows. 

 Consider first $e\in E$ and $s\in S$ such that $e\leq s$. Then $j_{s,e}$ gives an isometric embedding of $A_e$ into $A_s$, so one may view $A_e$ as a subspace of $A_s$. Let $\varphi_e$ be 
a continuous linear functional on $A_e$. Exel shows in  \cite[Proposition 6.1]{exel11} that  $\varphi_e$ extends to a continuous linear functional $\tilde{\varphi}_e^s$ on $A_s$ satisfying  $\|\tilde{\varphi}_e^s\|=\|\varphi_e\|$ and  
\[\tilde{\varphi}_e^s(x) = \lim_i \varphi_e(xu_i) =  \lim_i \varphi_e(u_ix)=\lim_i \varphi_e(u_ixu_i)\]
for every approximate unit $\{u_i\}_i$ for $A_e$ and every $x \in A_s$.

Next, let $e\in E$ and let $\varphi_e$ be a state on $A_e$. Define $\tilde{\varphi}_e$ on $\csp{\cA}$ by
\[
 \tilde{\varphi}_e\left(\sum_{s\in S}a_s\delta_s\right)=\sum_{s\in S, \,s\geq e}\tilde{\varphi}_e^s(a_s)\,.
\]
Then, as shown in \cite[Proposition 6.9]{exel11}, $\tilde{\varphi}_e$ 
is a state on $\csp{\cA}$ when $\csp{\cA}$ is considered as a normed $*$-algebra with respect to the norm $\|g\|_1= \sum_{s\in S}\|a_s\|$ for $g =\sum_s a_s\delta_s \in \csp{\cA}$.  

\smallskip Now, let $\cE=\{A_e\}_{e\in E}$ denote the restriction of $\cA$ to the semilattice $E$, let $\Pi^{\cE}= \{\pi_e^{\cE}\}_{e\in E}$ denote the universal representation of $\cE$ in $C^*(\cE)$, and fix a pure state $\varphi$ on $C^*(\cE)$. For each $e \in E$, one has that $\pi^{\cE}_e(A_e)$ is an ideal of $C^*(\cE)$, and $\varphi_e:=\varphi\circ\pi_e^{\cE}$ is a state on $A_e$ as long as $\varphi_e \neq 0$, that is,~whenever $\varphi$ does not vanish on $\pi^{\cE}_e(A_e)$. 
Moreover, \cite[Proposition 7.4]{exel11} says that 
there exists a positive linear functional $
\tilde{\varphi}$ on $\csp{\cA}$ such that:
\begin{enumerate}
 \item For every $s\in S$ and $a_s\in A_s$, one has that
 \[
  \tilde{\varphi}(a_s\delta_s)=\begin{cases}
                                \tilde{\varphi}^s_e(a_s) & \text{if there exists 
                                  $e\in E$ such that $\varphi_e\neq 0$ and $e\leq s$,}\\
                                \hspace{2.5ex} 0                        & \text {otherwise,}
                               \end{cases}
 \]
 \item For every $e\in E$ and every $a_e\in A_e$ one has that
 \[
  \tilde{\varphi}(a_e\delta_e)= \varphi_e(a_e) = \varphi(\pi^{\cE}_e(a_e)),
 \]
 \item $\|\tilde{\varphi}\|\leq \|\varphi\|$,
 
 \smallskip \item $\tilde{\varphi}$ vanishes on the ideal $\cN_\cA$.
\end{enumerate}
For later use we note that if $S$ is $E^*$-unitary, then (i) and (ii) together just say that
\begin{equation}\label{eq:stateextension}
 \tilde{\varphi}(a_s\delta_s)=\begin{cases}
                                \varphi(\pi^{\cE}_s(a_s)) & \text{if $s\in E$,}\\
                               \hspace{4ex} 0                     & \text{ otherwise.}
                              \end{cases}
\end{equation}
Let $H_{\tilde{\varphi}}$ be the Hilbert space completion of $\csp{\cA}$ with respect to the pre-inner-product given by
\[
 \langle g,h \rangle_{\tilde{\varphi}}=\tilde{\varphi}(h^*g) \quad \text{for} \, \, g,h\in\csp{\cA},
\]
and let $h\mapsto\widehat{h}$ denote the canonical map $\csp{\cA}\to H_{\tilde{\varphi}}$.
The GNS representation of $\tilde{\varphi}$, 
which is defined in the usual way by 
\[
 \Upsilon_{\tilde{\varphi}}(g)\widehat{h}=\widehat{gh} \quad \text{for} \, \, g,h\in\csp{\cA},
\]
gives a $*$-representation of $\csp{\cA}$ on $H_{\tilde{\varphi}}$. 

Exel's reduced cross sectional $C^*$-algebra  $C^*_r(\cA)$ is then defined as the Hausdorff completion of $\csp{\cA}$ with respect to the $C^*$-seminorm
given by
\[
 \|g\|'_{\rm r}=
 \sup_{\varphi}\|\Upsilon_{\tilde{\varphi}}(g)\|
\]
where the supremum is taken over the set $\mathcal{P}(C^*(\cE))$ consisting of all pure states of $C^*(\cE)$. Note that the kernel of $ \Upsilon_{\tilde{\varphi}}$ is given by \[{\rm Ker} \Upsilon_{\tilde{\varphi}}=\{g\in\csp{\cA}:  \tilde{\varphi}(h^*gh') = 0 \text{ for all } h, h'\in \csp{\cA} \}.\] So if  $\mathcal{K}_\cA:= \{ g\in\csp{\cA}:  \|g\|'_{\rm r}=0\}$, then 
\[\mathcal{K}_\cA= \bigcap_{\varphi\in \mathcal{P}(C^*(\cE))} \, {\rm Ker} \Upsilon_{\tilde{\varphi}}\,,\]
 and $C^*_r(\cA)$ is the completion of $\csp{\cA}/\mathcal{K_\cA}$ with respect to the norm \[ \|g+ \mathcal{K}_\cA\|''_{\rm r} :=  \|g\|'_{\rm r}.\]
 Letting $\iota^{\rm red}_\cA: \csp{\cA} \to C^*_r(\cA)$ denote the canonical $*$-homomorphism, one gets 
 the left regular representation 
  $\Pi^{\rm red} = \{ \pi_s^{\rm red}\}_{s\in S}$ 
 of $\cA$ in $C^*_r(\cA)$ by setting 
  \[\pi_s^{\rm red} 
 =  \iota^{\rm red}_\cA \circ \pi^0_s\] for each $s\in S$. The associated $*$-homomorphism 
 $\Psi_{\Pi^{\rm red}}: 
 C^*(\cA) \to C^*_r(\cA)$ is then surjective  (cf.~\cite[Proposition 8.6]{exel11}).

\section{The left regular representation of $C_{\rm KS}^{*}(\cA)$}\label{leftregrep}

Let $\cA$ be a Fell-bundle over $S$.  In this section we will describe how one may define the left regular representation  $\Phi_\Lambda$ of $\csp{\cA}$ in a certain $C^*$-algebra $B$ naturally associated with $\cA$, and show that $\Phi_\Lambda$ is injective. We will first construct the left regular pre-representation $\Lambda$ of $\cA$ in $B$. The associated $*$-homomorphim $\Phi_\Lambda$ from $C_{\rm KS}^{*}(\cA)$ into $B$ will then give the left regular representation of $C_{\rm KS}^{*}(\cA)$.

\subsection{}\label{WagnerP}
We begin by recalling some notation and a few facts 
that will be useful 
in our construction.
 For each $u \in S$,  we set 
\[D(u) = \{s\in S:ss^*\leq u^*u\}\,,\] 
so  $D(u^*) = \{v\in S:vv^*\leq uu^*\}$, and for each $e\in E$, we set \[S_e=\{s\in S: s^*s=e\}\,.\] 
The Wagner-Preston theorem (and its proof), see for example \cite[Proposition 2.1.3]{paterson},  says that for each $u\in S$, the map $\gamma_u: D(u) \to D(u^*)$ given by $\gamma_u(s) = us$ is a bijection, with inverse given by $\gamma_{u^*}:D(u^*)\to D(u)$. Moreover, it says that the map $\gamma: u\mapsto \gamma_u$ is an injective homomorphism from $S$ into $\mathcal{I}(S)$. A part of the last statement is that for $u_1, u_2, s \in S$, we have
\begin{equation}\label{domain}
s \in D(u_1u_2)\ \text{if and only if} \ s \in D(u_2) \ \text {and} \ u_2s \in D(u_1)\,.
\end{equation}

\medskip 
Consider $u\in S$ and assume $s \in S_e \cap D(u)$ for some $e \in E$. Then we have
\[(us)^*us = s^*u^*us = s^*u^*u\,ss^*s = s^*ss^*s = s^*s = e\,\]
so $us \in S_e \cap D(u^*)$. Hence, if  $v\in S_e \cap D(u^*)$, then $u^*v \in S_e \cap D(u)$. 
It follows that the map
$s\mapsto us $ gives a bijection from $S_e \cap D(u) $ onto $ S_e \cap D(u^*)$, with inverse given by $v \mapsto u^*v$ for $ v\in S_e \cap  D(u^*)$. 

\subsection{} Let now $\cA=\{A_s\}_{s\in S}$ be a Fell bundle over $S$. 
Given $e\in E$, set
\begin{align*}
X_e=\Big\{\xi \in \prod_{s\in S_e}A_s
:\sum_{s\in S_e}\xi(s)^*\cdot\,\xi(s)\mbox{ is norm convergent in} \, \, A_e\Big\}\,.
\end{align*}
Note that the sum $\sum_{s\in S_e}\xi(s)^*\cdot\,\xi(s)$ makes sense since \[\xi(s)^*\cdot\xi(s)\in A_{s^*s}=A_e\] for each $s\in S_e$. Proceeding in the same way as for the direct sum of a family of right Hilbert $C^*$-modules over the same $C^*$-algebra \cite{lance95}, it is not difficult to check that  $X_e$ is a subspace of the product vector space $\prod_{s\in S_e}A_s$, which becomes a right Hilbert $A_e$-module  with respect to the operations
\begin{align*}
(\xi\cdot a)(s)&=\xi(s)\cdot a \, \in A_{se}=A_s, \\
\langle \xi,\eta \rangle_e&=\sum_{s\in S_e}\xi(s)^*\cdot\eta(s) \, \in A_e, 
\end{align*}
for $\xi, \eta \in X_e, \, a\in A_e$ and $ s\in S_e$. 

\smallskip Consider $e\in E$ and $u\in S$. For $a_u\in A_u$, we let
\[
\lambda_{e,u}(a_u):X_e\to X_e
\]
be the linear operator defined by
\[
\big(\lambda_{e,u}(a_u)\xi\big)(v)=\begin{cases}
a_u\cdot\xi(u^*v) & \mbox{ if } v\in D(u^*),\\
\hspace{4ex} 0                 & \mbox{ otherwise.}
\end{cases}
\]
for  $\xi \in X_e$ and $v\in S_e$. To see that $\lambda_{e,u}(a_u)$ is well defined, let $\xi \in X_e$.  
If $v\in S_e \cap D(u^*)$, 
then $u^*v\in S_e$, and
$\xi(u^*v) \in A_{u^*v}$, so we get 
\[a_u\cdot\xi(u^*v) \in A_u\cdot A_{u^*v}\subset A_{uu^*v}=A_{uu^*vv^*v}=A_{vv^*v}=A_v\,.\]
Thus we see that $\lambda_{e,u}(a_u)\xi$ lies in $\prod_{v\in S_e} A_v$. 
Moreover, if $v \in \,S_e \,\cap \,D(u^*)$, then one readily verifies that the map $b \mapsto a_u \cdot b$ is an adjointable linear map from $A_{u^*v}$ into $A_v$ (with adjoint map $c \mapsto a_u^*\cdot c$); thus, using \cite[Proposition 1.2]{lance95}, we get
\[\big (a_u\cdot \xi(u^*v)\big)^*\cdot \big(a_u\cdot \xi(u^*v)\big)\, \leq \, \|a_u\|^2\ \xi(u^*v)^*\cdot  \xi(u^*v)\,.\]
Now, since $\xi \in X_e$, the sum 
\[\sum_{v \in \,S_e \,\cap \,D(u^*)}  \xi(u^*v)^*\cdot  \xi(u^*v)\] is norm-convergent in $A_e$, and it follows 
that
\[\sum_{v\in S_e}\big(\lambda_{e,u}(a_u)\xi\big)(v)^*\cdot \big(\lambda_{e,u}(a_u)\xi\big)(v)
\,  = \sum_{v \in \,S_e \,\cap \,D(u^*)} \big (a_u\cdot \xi(u^*v)\big)^*\cdot \big(a_u\cdot \xi(u^*v)\big)\]
is also norm-convergent in $A_e$. Thus, $\lambda_{e,u}(a_u)\xi \in X_e$, as desired.

\medskip Next, we show that $\lambda_{e,u}(a_u)\in\lin{X_e}$. 
For $\xi, \eta \in X_e$, we have
\begin{align*}
\langle \lambda_{e,u}(a_u)\xi,\eta \rangle_e &= \sum_{v\in S_e}(\lambda_{e,u}(a_u)\xi)(v)^*\cdot\eta(v) \\
&= \sum_{v\in  S_e \cap D(u^*)}(a_u\cdot\xi(u^*v))^*\cdot\eta(v)\\
&= \sum_{v\in S_e \cap D(u^*) }\xi(u^*v)^*\cdot a_u^*\cdot\eta(v)\\
&= \sum_{s\in S_e \cap D(u) }\xi(s)^*\cdot a_u^*\cdot\eta(us)\\
&= \sum_{s\in S_e}\xi(s)^*\cdot(\lambda_{e,u^*}(a_u^*)\eta)(s) \\
&=\langle \xi,\lambda_{e,u^*}(a_u^*)\eta \rangle_e\,,
\end{align*}
where we have used that the map $v \mapsto u^*v$ is a bijection from $S_e \cap  D(u^*)$ onto $S_e \cap  D(u)$.
This shows that $\lambda_{e,u}(a_u)$
is an adjointable operator on $X_e$, with adjoint given by
\begin{equation}\label{adjoint}
\lambda_{e,u}(a_u)^*=\lambda_{e,u^*}(a_u^*).
\end{equation}

\smallskip Thus we get a map $\lambda_{e,u}:A_u\to\lin{X_e}$ for each $e\in E$ and each $u\in S$. 
For each $e \in E$ we set 
\[
\Lambda^e :=\{\lambda_{e,u}\}_{u\in S}\,.
\]
To show that $\Lambda^e$ is a pre-representation of $\cA$ in $\lin{X_e}$, in view of (\ref{adjoint}), we only have to show that for $u,u'\in S$, $a\in A_u$ and $a'\in A_{u'}$, we have
\begin{equation}\label{rep-prop}
\lambda_{e,uu'}(a\cdot a')=\lambda_{e,u}(a)\,\lambda_{e,u'}(a').
\end{equation}

To prove this, consider $\xi \in X_e$ and $v\in S_e$. Then 
\begin{align*}
\big(\lambda_{e,uu'}(a\cdot a')\xi\big)(v)&=\begin{cases}
a\cdot a'\cdot \xi((uu')^*v) & \mbox{ if } v\in D\big((uu')^*\big),\\
\hspace{6ex}0 & \mbox{ otherwise,}
\end{cases}\\
&=\begin{cases}
a\cdot a'\cdot \xi(u'^*u^*v) & \mbox{ if } v\in D\big(u'^*u^*\big),\\
\hspace{6ex}0 & \mbox{ otherwise,}
\end{cases}
\end{align*}
while
\[
\big(\lambda_{e,u}(a)\lambda_{e,u'}(a')\xi\big)(v)=\begin{cases}
a\cdot(\lambda_{e,u'}(a')\xi)(u^*v) & \mbox{ if } v\in D(u^*),\\
\hspace{8ex}0 & \mbox{ otherwise}
\end{cases}
\]
\[=\begin{cases}
a\cdot a'\cdot\xi(u'^*u^*v) & \mbox{ if } v\in D(u^*) \mbox{ and } u^*v \in D(u'^*),\\
\hspace{8ex}0 & \mbox{ otherwise.}
\end{cases}
\] 
Now, using (\ref{domain}) with $u_1 = u'^* $ and $u_2= u^*$ gives that $v\in D\big(u'^*u^*\big)$ if and only if $v\in D(u^*) \mbox{ and } u^*v \in D(u'^*)$, 
so we see that  \[\big(\lambda_{e,uu'}(a\cdot a')\xi\big)(v) = \big(\lambda_{e,u}(a)\lambda_{e,u'}(a')\xi\big)(v)\,.\] It follows that (\ref{rep-prop}) holds, as desired. 

\medskip We can now form the product pre-representation $\Lambda = \prod_{e\in E} \Lambda^e$ of $\cA$ in the product $C^*$-algebra $B:=\prod_{e\in E}\lin{X_e}$. It is natural to call $\Lambda$ the \emph{left regular pre-representation} of $\cA$ in $B$. It is given by $\Lambda =\{\lambda_u\}_{u\in S}$, 
where $\lambda_u:A_u\to B$ is defined by
\[
 \lambda_u(a_u)=\Big(\lambda_{e,u}(a_u)\Big)_{e\in E}
\]
for $u\in S$ and $a_u \in S$.
The associated  $*$-homomorphism $\Phi_\Lambda:\csp{\cA}\to B$ (resp.~$C^*_{\rm KS}(\cA)\to B$),
which satisfies
\[
\Phi_\Lambda\Big(\sum_{u\in S}a_u\delta_u\Big)=
 \ \left(\sum_{u\in S}\lambda_{e,u}(a_u)\right)_{e\in E},
\]
will be called 
the \emph{left regular representation of} $\csp{\cA}$ (resp.~$C^*_{\rm KS}(\cA)$) in $B$. Note that 
$\Phi_\Lambda=\prod_{e\in E}\Phi_{\Lambda^e}$, since
\[\big(\prod_{e\in E}\Phi_{\Lambda^e}\big)\Big(\sum_{u\in S}a_u\delta_u\Big)= \left(\Phi_{\Lambda^e}\Big(\sum_{u\in S}a_u\delta_u\Big) \right)_{e\in E} = \ \left(\sum_{u\in S}\lambda_{e,u}(a_u)\right)_{e\in E}.\]

\subsection{}
Our aim is to show that $\Phi_\Lambda$ is injective on $\csp{\cA}$ (cf.~Theorem \ref{theo:wording}). The following lemma will be crucial. 

\begin{lemma}\label{lem:wordinghamsub}
 Assume
 that $g=\sum_{u\in S}a_u\delta_u\in\csp{\cA}$ satisfies $\Phi_\Lambda(g)=0$  
and let $e,f\in E$ with $e\leq f$. 
 
 \smallskip Then, for each $t\in S$ and each $b\in A_e$,
 we have
\[
\sum_{u\in S, \,f\leq u^*u, \,ue=t } a_u\cdot b = 0\,.
\]
\end{lemma}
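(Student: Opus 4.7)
The plan is to exploit the hypothesis $\Phi_\Lambda(g)=0$ -- which implies $\Phi_{\Lambda^f}(g)=0$ on $X_f$ -- by testing it against the vector $\xi\in X_f$ supported only at the idempotent $f\in S_f$ with value $\xi(f)=j_{f,e}(b)\in A_f$ (and $\xi(s)=0$ for $s\neq f$). The reason to work in $X_f$ rather than $X_e$ is that this naturally produces the condition $f\leq u^*u$: for $v\in S_f$, the condition ``$v\in D(u^*)$ and $u^*v=f$'' is equivalent to ``$v=uf$ and $f\leq u^*u$''. Indeed, if $v\in D(u^*)$ then $uu^*v=v$, so $u^*v=f$ forces $v=uu^*v=uf$; conversely, $v=uf$ with $f\leq u^*u$ yields $u^*v=u^*uf=f$ and $vv^*=ufu^*\leq uu^*$.

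Evaluating $(\Phi_{\Lambda^f}(g)\xi)(v)=0$ for a fixed $v\in S_f$ therefore reads
\[
\sum_{u\in S,\;uf=v,\;f\leq u^*u} a_u\cdot j_{f,e}(b)=0.
\]
Applying axiom (xii) with $r=r'=u$ and $s=e\leq f=s'$, we rewrite $a_u\cdot j_{f,e}(b)=j_{uf,ue}(a_u\cdot b)=j_{v,ve}(a_u\cdot b)$, the last equality using $uf=v$ and $ue=ufe=ve$ (since $fe=e$). By linearity and isometricity (hence injectivity) of $j_{v,ve}$, the identity above becomes
\[
\sum_{u\in S,\;uf=v,\;f\leq u^*u} a_u\cdot b=0 \quad\text{for every } v\in S_f.
\]

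To conclude, I partition the index set appearing in the statement by the value of $uf$. If $ue=t$ and $f\leq u^*u$, then $v:=uf\in S_f$ (since $v^*v=fu^*uf=f$) and $ve=ufe=ue=t$; conversely, any $u$ with $uf=v$ and $f\leq u^*u$ automatically satisfies $ue=ve$. Hence
\[
\{u\in S:ue=t,\;f\leq u^*u\}=\bigsqcup_{v\in S_f,\,ve=t}\{u\in S:uf=v,\;f\leq u^*u\},
\]
and summing the previous vanishing over the (finitely many relevant) $v$ yields the desired identity.

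The principal conceptual obstacle is the very first step: the naive choice $\xi\in X_e$ with $\xi(e)=b$ only produces the weaker restriction $e\leq u^*u$ in place of $f\leq u^*u$. Recognising that one should instead test in $X_f$ and embed $b$ via $j_{f,e}$, so that $f$ can enter the computation, is the key insight; everything afterwards is bookkeeping with axiom (xii), the isometric embeddings, and the partition of the index set.
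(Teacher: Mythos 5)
Your proof is correct, and its core idea coincides with the paper's: both exploit $\Phi_{\Lambda^f}(g)=0$ by applying it to the element of $X_f$ supported at the single point $f\in S_f$ with value in $j_{f,e}(A_e)$, and both rest on the observation (your equivalence, the paper's claim (6)) that for $v\in S_f$ the conditions ``$v\in D(u^*)$ and $u^*v=f$'' amount to ``$v=uf$ and $f\leq u^*u$''. Where you genuinely diverge is in the endgame. The paper first invokes the Cohen--Hewitt factorization theorem to write $b=cd$ with $c,d\in A_e$, tests against $j_{f,e}(c)\odot\varepsilon_f$, reads off that the resulting formal sum vanishes in $\csp{\cA}$, and then right-multiplies by $d\delta_e$ in the convolution algebra to move the coefficients from $A_{uf}$ down to $A_{ue}$ and collect them at $\delta_{ue}$. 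You instead apply axiom (xii) (with $r=r'=u$, $s=e\leq f=s'$) to identify $a_u\cdot j_{f,e}(b)=j_{uf,ue}(a_u\cdot b)$, note that all $u$ with $uf=v$ share $ue=ve$, and use the isometry (hence injectivity) of $j_{v,ve}$ to kill each group, finishing with the partition of $\{u: ue=t,\ f\leq u^*u\}$ by the value of $uf$. This buys you a proof that avoids Cohen--Hewitt entirely and works with $b$ directly rather than a factorization of it; the bookkeeping via axiom (xii) and the explicit partition is clean and complete, and I see no gap in it.
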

Note that here (and elsewhere), we use the convention that a sum over an empty index set is equal to $0$.
\begin{proof}
For  each $s\in S$ and $a\in A_s$, we will let $a\odot\varepsilon_{s}$ denote the element  of $X_{s^*s}$  given for each $t\in S_{s^*s}$ by 
\[(a\odot\varepsilon_{s})(t) = \begin{cases} a & \ \text{if} \ t=s,\\
0 & \ \text{if} \ t \neq s\,.
\end{cases}
\]
For every $v\in E$, we set $g_v=\sum_{u\in S_v} a_u\delta_u$. Then $g_v\in\csp{\cA}$, $g_v=0$ for all but finitely many $v$ in $E$, and \[g=\sum_{v\in E}g_v\,.\] Moreover, 
\begin{equation}\label{eq:wordingham1}
0=\Phi_\Lambda(g)=\sum_{v\in E}\Phi_\Lambda(g_v)=\left(\sum_{v\in E}\sum_{u\in S_v}\lambda_{p,u}(a_u)\right)_{p\in E}
\end{equation}

\medskip \noindent Now, consider $v\in E$, $u\in S_v$, $a\in A_u$ and $a'\in A_f$. 

\medskip Note first  that $a'\odot\varepsilon_f \in X_{f^*f}=X_f$. Moreover, if $f\leq v$, then \[(uf)^*uf = fu^*uf= fvf = f\,,\] so $uf \in S_f$ and $(a\cdot a')\odot\varepsilon_{uf} \in X_{(uf)^*uf} = X_f$.
We claim that \begin{equation}\label{regfund}
\lambda_{f,u}(a)(a'\odot\varepsilon_f)=\begin{cases}
(a\cdot a')\odot\varepsilon_{uf}&\mbox{if }f\leq v\,,\\
\hspace{6ex}0&\mbox{otherwise.}
\end{cases}
\end{equation}
To prove this claim, let $t\in S_{f}$. 
Then we have 
 $t \in D(u^*) $, that is, $tt^*\leq uu^*$, if and only if $f=t^*t\leq u^*u=v$. As
\[
\big(\lambda_{f,u}(a)(a'\odot\varepsilon_f)\big)(t)=\begin{cases}
a\cdot (a'\odot\varepsilon_{f})(u^*t)&\mbox{if } t\in D(u^*)\\
\hspace{6ex} 0&\mbox{otherwise,}
\end{cases}
\]
we see that $\lambda_{f,u}(a)(a'\odot\varepsilon_f)=0 $ when $f\not\leq v$. 

\smallskip If $f\leq v$, thus
 $t\in S_f \cap  D(u^*)$, then we have  $u^*t \in S_f \cap D(u)$, with $u^*t = f$ if and only $uf =t$ (cf.~ \ref{WagnerP}), so we get  
\begin{align*}\big(\lambda_{f,u}(a)(a'\odot\varepsilon_f)\big)(t) &=  a\cdot (a'\odot\varepsilon_{f})(u^*t)\\
&=\begin{cases}
a\cdot a' &\mbox{if } t= uf \\
\hspace{2ex} 0&\mbox{otherwise}
\end{cases}
\,\, =
\big((a\cdot a')\odot\varepsilon_{uf}\big)(t)\,.
\end{align*}
We have thus shown that $\lambda_{f,u}(a)(a'\odot\varepsilon_f)= (a\cdot a')\odot\varepsilon_{uf}$ whenever $f\leq v$, and this finishes the proof of (\ref{regfund}).

\medskip Let now $b \in A_e$. By the Cohen-Hewitt factorization theorem \cite[Theorem 32.22]{hewitt_ross70} we can write $b$ as a product 
$b=cd$ where $c,d\in A_e$. As $e\leq f$, we get from (\ref{a-e-f}) that
 \begin{equation}\label{c-d}
 j_{f,e}(c)\cdot d= cd =b\,.
 \end{equation} 

For each $v\in E$  we get from (\ref{regfund}) that
\[
\sum_{u\in S_v}\lambda_{f,u}(a_u)(j_{f,e}(c)\odot\varepsilon_f)=\begin{cases}
\sum_{u\in S_v}(a_u\cdot j_{f,e}(c))\odot\varepsilon_{uf}&\mbox{if }f\leq v\,,\\
\hspace{12ex}0&\mbox{otherwise.}
\end{cases}
\]
Using  \eqref{eq:wordingham1} it then follows that
\[
0=\sum_{v\in E}\sum_{u\in S_v}\lambda_{f,u}(a_u)(j_{f,e}(c)\odot\varepsilon_f)=\sum_{\{v\in E:f\leq v\}}\sum_{u\in S_v}(a_u\cdot j_{f,e}(c))\odot\varepsilon_{uf}\,.
\]
By looking at individual coefficients we can then conclude that in $\csp{\cA}$,
\begin{equation}\label{eq:wordingham2}
0=\sum_{\{v\in E:f\leq v\}}\sum_{u\in S_v}(a_u\cdot j_{f,e}(c))\,\delta_{uf}
\end{equation}
Since $e\leq f$ we get from  \eqref{eq:wordingham2} and (\ref{c-d}) that 
\begin{equation*}
0=\left(\sum_{\{v\in E:f\leq v\}}\sum_{u\in S_v}(a_u\cdot j_{f,e}(c))\,\delta_{uf}\right)\big(d\delta_e\big)=\sum_{\{v\in E:f\leq v\}}\sum_{u\in S_v}(a_u\cdot b)\,\delta_{ue}\,.
\end{equation*}
We see that given $t\in S$, the $t$-coefficient of the sum on the right hand side of the above equation is
\[
\sum_{u\in S, \,f\leq u^*u, ue=t}\, a_u\cdot b\,,
\]
which must then be equal to $0$.
\end{proof}
We will need another lemma.
Let $F$ be a semilattice and $A$ be a Banach space. As usual, we will denote the dual space of $A$, consisting of all continuous linear functionals on $A$, by $A^*$.
We let $\csp{F,A}$ denote the vector space of all 
finitely supported functions from $F$ to $A$. We will describe an element of $\csp{F,A}$ as a formal sum
$\sum_{f\in F}a_f\delta_f$ where each $a_f\in A$ and $a_f=0$ for all but finitely many $f$ in $F$.
Given $\psi\in A^*$ and $e\in F$, we define $\theta_{\psi,e}:\csp{F,A}\to\mathbb{C}$ to be the linear functional given by
\[
\theta_{\psi,e}\Big(\sum_{f\in F}a_f\delta_f\Big)= \sum_{f \in F, \,f\geq e} \psi(a_f)
\]

\begin{lemma}\label{lem:separation}
Let $A$ be a Banach space and $F$ be a semilattice. Then the set $\{\theta_{\psi,e}:e\in F,\psi\in A^*\}$ separates the elements of $\csp{F,A}$.
\end{lemma}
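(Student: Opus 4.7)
The plan is as follows. Given a nonzero element $g=\sum_{f\in F}a_f\delta_f\in\csp{F,A}$, I will exhibit a single pair $(\psi,e)$ with $\theta_{\psi,e}(g)\neq 0$. The key observation is that the sum defining $\theta_{\psi,e}$ runs over those $f\in F$ with $f\geq e$, so choosing $e$ carefully among the support can cut the sum down to a single term.

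Concretely, let $T=\{f\in F:a_f\neq 0\}$, which is finite and nonempty since $g\neq 0$. Because $T$ is finite and the partial order $\leq$ on the semilattice $F$ is genuinely a partial order, $T$ has at least one maximal element; call it $e$. By maximality, if $f\in T$ satisfies $f\geq e$, then $f=e$ (otherwise $f>e$, contradicting that $e$ is maximal in $T$). Since $a_f=0$ for $f\notin T$, this gives
\[
\sum_{f\in F,\,f\geq e}a_f = a_e \in A\,.
\]
As $a_e\neq 0$, Hahn--Banach produces a functional $\psi\in A^*$ with $\psi(a_e)\neq 0$. Then
\[
\theta_{\psi,e}(g)=\sum_{f\in F,\,f\geq e}\psi(a_f)=\psi(a_e)\neq 0\,,
\]
which shows that $\{\theta_{\psi,e}:e\in F,\psi\in A^*\}$ separates the elements of $\csp{F,A}$.

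There is no real obstacle here; the only thing to be mindful of is that one must pick a \emph{maximal} element of the (finite) support rather than a minimal one, so that the sum $\sum_{f\geq e}\psi(a_f)$ collapses to the single term $\psi(a_e)$ with no possibility of cancellation from other support elements lying above~$e$.
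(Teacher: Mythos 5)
Your proof is correct and is essentially the paper's own argument: the paper also selects $e$ in the support with $a_f=0$ for all other $f\geq e$ (i.e.\ a maximal element of the finite support) and then picks $\psi\in A^*$ with $\psi(a_e)\neq 0$ so that the sum collapses to the single term $\psi(a_e)$. No issues.
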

\begin{proof}
Suppose $\sum_{f\in F}a_f\delta_f\neq 0$. Since $a_f=0$ for all but finitely many $f$ in $F$, we can choose $e\in F$ such that $a_e\neq 0$ and $a_f=0$ for all $f\in F\setminus \{e\}$ satisfying $f\geq e$. We may then pick $\psi\in A^*$ such that $\psi(a_e)\neq 0$, and this gives
\[
 \theta_{\psi,e}\left(\sum_{f\in F} a_f\delta_f\right)=\sum_{f\in F,\,f\geq e}\psi(a_f)=\psi(a_e)\neq 0.
\]
\end{proof}

The following theorem is a generalization of Wordingham's theorem \cite[Theorem 2.1.1]{paterson}, and our proof follows the pattern of Wordingham's original proof.
\begin{theorem}\label{theo:wording}
Let $\cA=\{A_s\}_{s\in S}$ be a Fell bundle over an inverse semigroup $S$. Then the left regular representation of $\csp{\cA}$ is injective.
\end{theorem}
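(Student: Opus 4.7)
The plan is to follow Wordingham's pattern: fix an arbitrary $u_0 \in S$, let $v_0 := u_0^* u_0$, and show that the $u_0$-coefficient $a_{u_0}$ of any $g = \sum_u a_u \delta_u \in \csp{\cA}$ with $\Phi_\Lambda(g) = 0$ must vanish. The key reduction is to parametrize the contributions $a_u \cdot b$ (for $u \geq u_0$ and $b \in A_{v_0}$) by the idempotent $u^*u$, obtaining an element of $\csp{F, A_{u_0}}$ where $F := \{e \in E : e \geq v_0\}$ is a sub-semilattice of $E$, and then to annihilate this element by means of Lemma \ref{lem:separation}.

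First I would record the order-theoretic facts that $uv_0 = u_0$ is equivalent to $u \geq u_0$, and that the latter condition forces $u^*u \geq v_0$, so that $u^*u \in F$ and $a_u \cdot b \in A_{uv_0} = A_{u_0}$ whenever $u \geq u_0$ and $b \in A_{v_0}$. For each such $b$ I would then define
\[
h_b := \sum_{k \in F}\Bigl(\sum_{u \in S,\, u \geq u_0,\, u^*u = k} a_u \cdot b\Bigr)\delta_k \ \in \ \csp{F, A_{u_0}}\,,
\]
which is finitely supported because $g$ is. Applying Lemma \ref{lem:wordinghamsub} with its parameters ``$e = v_0$'', ``$t = u_0$'' and ``$f$'' allowed to vary through $F$ (so that the hypothesis $e \leq f$ automatically holds), its conclusion reads $\sum_{u \geq u_0,\, u^*u \geq f} a_u \cdot b = 0$, that is, $\sum_{k \in F,\, k \geq f} h_b(k) = 0$ in $A_{u_0}$. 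Composing with any $\psi \in A_{u_0}^*$ gives $\theta_{\psi, f}(h_b) = 0$ for every $\psi$ and every $f \in F$, so Lemma \ref{lem:separation} forces $h_b = 0$; in particular $h_b(v_0) = 0$.

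To conclude, I would inspect $h_b(v_0)$: if $u \geq u_0$ and $u^*u = v_0 = u_0^* u_0$, then $u = u \cdot u^*u = u v_0 = u_0$, so the defining sum collapses to the single term $a_{u_0}\cdot b$. Hence $a_{u_0}\cdot b = 0$ for every $b \in A_{v_0}$, and since $A_{u_0}$ is a right Hilbert $A_{v_0}$-module, letting $b$ run through an approximate unit of $A_{v_0}$ yields $a_{u_0}\cdot b \to a_{u_0}$ in norm and therefore $a_{u_0} = 0$. Since $u_0$ was arbitrary, $\Phi_\Lambda$ is injective on $\csp{\cA}$.

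I expect the main subtlety to be the book-keeping when repackaging Lemma \ref{lem:wordinghamsub}'s sum into a statement about $h_b \in \csp{F, A_{u_0}}$: one must check carefully that the constraint ``$uv_0 = u_0$'' collapses exactly to ``$u \geq u_0$'', that $u \geq u_0$ places $u^*u$ in $F$, and that $a_u \cdot b$ consistently lands in $A_{u_0}$. Once that setup is in place, Lemma \ref{lem:separation} does the heavy lifting and the final Hilbert-module step is routine.
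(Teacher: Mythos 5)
Your proposal is correct and follows essentially the same route as the paper's proof: both package the sums from Lemma \ref{lem:wordinghamsub} into a finitely supported element of $\csp{F,A_{u_0}}$ over the semilattice $F$ of idempotents above $u_0^*u_0$, kill it with Lemma \ref{lem:separation}, and then read off the coefficient at $u_0^*u_0$, which collapses to $a_{u_0}\cdot b$. The only (immaterial) difference is the final step, where you use an approximate unit of $A_{v_0}$ acting on the Hilbert module $A_{u_0}$ instead of the paper's choice $b=a_{u_0}^*\cdot a_{u_0}$.
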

\begin{proof}
Let $g\in\csp{\cA}$ and express $g$ as a sum $g=\sum_{u\in S}a_u\delta_u$, where $\mathrm{supp}(g)=\{u\in S : a_u\neq 0\}$ is finite. 
Assume $\Phi_\Lambda(g)=0$. We want to show that $a_t=0$ for each $t\in S$. Since $g=\sum_{e\in E}\sum_{u\in S_e}a_u\delta_u$ it is sufficient to show that for any $e\in E$, $a_t=0$ when $t\in S_e$. 

Fix $e\in E$ and consider $t\in S$. Let $F$ be the subsemilattice of $E$ given by \[F=\{v\in E:e\leq v\}\,.\] Also, let $f\in F$ and $b\in A_e$. For each $v\in F$ set
\begin{align*}
\beta^t_v&=\sum_{u\in S, \, v=u^*u, \,ue=t} a_u\cdot b \,\, \, \in A_t\,,\\
\beta^t&=\sum_{v\in F}\,\beta^t_v\, \delta_v \,\,  \in \csp{F,A_t}\,.
\end{align*}
Note that $\beta^t$ has finite support since $\beta^t_v=0$ if $v\notin\{u^*u:u\in\mathrm{supp}(g)\}$, and $\mathrm{supp}(g)$ is finite. Now, for each $\psi\in (A_t)^*$, we get from Lemma~\ref{lem:wordinghamsub} that
\begin{align*}
\theta_{\psi,f}\big(\beta^t\big)&=\sum_{v\in F,\,f\leq v}\,\psi\big(\beta^t_v\big)\\
&=\psi\left(\sum_{u\in S,\, f\leq u^*u,\,ue=t} a_u\cdot b\right)=0\,.
\end{align*}
 Then $\beta^t=0$ by Lemma~\ref{lem:separation}, so $\beta^t_v=0$ for each $v\in F$. In particular, since $e\in F$, we get
\begin{equation}\label{beta-te}
\sum_{u\in S_e, \, ue=t} a_u\cdot b = \beta^t_e = 0\,.
\end{equation}
Assume now that $t\in S_e$. If $u\in S_e$ satisfies that $ ue=t$, then we have $ut^*t=t$ and $u^*u=t^*t$, which together imply that $u=t$. So (\ref{beta-te}) gives that $a_t\cdot b=0$. Choosing $b=a_t^*\cdot a_t\in A_{t^*t}=A_e$, we get $a_t\cdot a_t^*\cdot a_t=0$, so $(a_t^*\cdot a_t)^2 = a_t^*\cdot a_t\cdot a_t^*\cdot a_t=0$, hence $a_t^*\cdot a_t=0$, and axiom (x)
in the definition of a Fell bundle gives that $a_t=0$, as desired.
\end{proof}

\subsection{} 
We define the
\emph{reduced KS-cross sectional $C^*$-algebra  $\CrKS{\cA}$ of $\cA$} as the completion of $\csp{\cA}$ with respect to the norm $\|\cdot\|_r$ given by
\[\|g\|_{\rm r} :=\|\Phi_\Lambda(g)\| \] for $g \in \csp{\cA}$. Alternatively, we may consider $\CrKS{\cA}$ to be given as the norm-closure of $\Phi_\Lambda(\csp{\cA})$ in $B$, or, equivalently, as $\Phi_\Lambda(\CKS{\cA})$.

Recall that $\mathcal{N}_\cA$ denotes the two-sided selfadjoint ideal of $\csp{\cA}$ spanned by the set
\[
 \{a_s\delta_s-j_{t,s}(a_s)\delta_t:s,t\in S, s\leq t, a_s\in A_s\}\,.
\]
We define $\mathcal{I}_\cA$ to be the closure of $\mathcal{N}_\cA$ inside $\CrKS{\cA}$. In other words, we set \[\mathcal{I}_\cA= \overline{\Phi_\Lambda(\mathcal{N}_\cA)}\,.\] It is easy to check that $\mathcal{I}_\cA$ is an 
ideal of $\CrKS{\cA}$. Hence we may form the quotient $C^*$-algebra \[\CrAlt{\cA}:=\CrKS{\cA}/\mathcal{I}_\cA\,,\] which 
provides an alternative version of the reduced cross sectional $C^*$-algebra of $\cA$. We will let $q_\cA^r:\CrKS{\cA}\to \CrAlt{\cA}$ denote the quotient map. It is not clear 
whether $\CrAlt{\cA}$ is isomorphic to Exel's reduced $C^*$-algebra $C^*_r(\cA)$ (cf.~\ref{sec:exelreduced}). 
We will show in Section \ref{sec:comparison} that 
this is true under certain assumptions.

For $u\in S$ let $\lambda^{\rm alt}_u:A_u \to \CrAlt{\cA}$ be defined by 
\[\lambda^{\rm alt}_u(a_u) = \lambda_u(a_u) + \mathcal{I}_\cA\] for all $a_u \in A_u$. It is then almost immediate that  $\Lambda^{\rm alt}:=\{\lambda^{\rm alt}_u\}_{u\in S}$ is a representation of $\cA$ in 
$\CrAlt{\cA}$. 
Using the universal property of $C^*(\cA)$ we get a surjective $*$-homomorphism $\Psi_{\Lambda^{\rm alt}}$ from $C^*(\cA)$ onto  
$\CrAlt{\cA}$ satisfying \[\Psi_{\Lambda^{\rm alt}}\big(\pi_u^\cA(a_u)\big) = \lambda^{\rm alt}_u(a_u) = \lambda_u(a_u) + \mathcal{I}_\cA \] for all $u\in S$ and $a_u \in A_u$. Similarly, we get a surjective $*$-homomorphism $\Phi_{\Lambda^{\rm alt}}$ from $\CKS{\cA}$ onto  
$\CrAlt{\cA}$, which satisfies \[\Phi_{\Lambda^{\rm alt}} = q^r_\cA \circ \Phi_{\Lambda} = \Psi_{\Lambda^{\rm alt}}\circ q_\cA\,.\]

The following commutative diagram sums up the relationship between the various algebras and some of the $*$-homomorphisms defined so far.

\begin{center}
\begin{tikzpicture}[>=angle 90, scale=3, text height=1.5ex, text depth=0.25ex]
\node (CcA)    at (0,2) {$\csp{\cA}$};
\node (CKSA)   at (1,2) {$\CKS{\cA}$};
\node (CrKSA)  at (2,2) {$\CrKS{\cA}$};
\node (CcANA)  at (0,1) {$\csp{\cA}/\cN_\cA$};
\node (CA)     at (1,1) {$C^*(\cA)$};
\node (CraltA) at (2,1) {$\CrAlt{\cA}$};
\node (CrA)    at (1,0) {$C^*_r(\cA)$};
\path[right hook->,font=\scriptsize]
(CcA) edge (CKSA);
\path[->>,font=\scriptsize]
(CcA) edge node[auto] {$Q_\cA$} (CcANA)
(CKSA) edge node[auto] {$q_\cA$} (CA)
(CrKSA) edge node[auto] {$q_\cA^{\rm r}$} (CraltA)
(CKSA) edge node[above] {$\Phi_\Lambda$} (CrKSA)
(CKSA) edge node[sloped, above, yshift=1pt] {$\Phi_{\Lambda^{\rm alt}}$} (CraltA)
(CA) edge node[above, yshift=1pt] {$\Psi_{\Lambda^{\rm alt}}$} (CraltA)
(CA) edge node[auto] {$\Psi_{\Pi^{\rm red}}$} (CrA);
\path[->,font=\scriptsize]
(CcA) edge node[sloped, above] {$\iota_\cA$} (CA)
(CcANA) edge node[above] {$R_\cA$} (CA)
(CcA) edge[bend right=80] node[above, sloped, near end] {$\iota_\cA^{\rm red}$} (CrA);
\end{tikzpicture}
\end{center}

\section{Fell bundles over semilattices}\label{semilatt}

In this section we look at the case where $S=E$ is a semilattice, and consider a Fell bundle $\cE=\{A_e\}_{e\in E}$. 
Since $E_e=\{f\in E:f^*f=e\}=\{e\}$ for each $e\in E$, the Hilbert $A_e$-module $X_e$ that occurs 
in the definition of the pre-representation $\Lambda^e=\big(\lambda_{e,f}\big)_{f\in E}$ of $\cE$ in $\lin{X_e}$  is nothing but $A_e$ itself (with its standard structure).
Thus $\CrKS{\cE}$ can be viewed as a $C^*$-subalgebra of $\prod_{e\in E}\lin{A_e}$,
and for $e, f\in E$ and $a_f\in A_f$, $\lambda_{e,f}(a_f):A_e\to A_e$ is given by
\begin{equation}\label{eq:lamb-e-f}
 \lambda_{e,f}(a_f)b = \begin{cases}
                          a_f\cdot b & \text{ if } e\leq f,\\
                          0          & \text{ otherwise, }
                         \end{cases}   \quad \quad \text{for  all}\ b\in A_e.                    
\end{equation}
As before, let $\Phi_{\Lambda^e}:\csp{\cE}\to\lin{A_e}$ be the corresponding $*$-homomorphism given by
\[
\Phi_{\Lambda^e}\Big(\sum_{f\in E}a_f\delta_f\Big)=\sum_{f\in E}\lambda_{e,f}(a_f).
\]
Let $(a^i)$ be an approximate unit for $A_e$,  let $f\in E$ be such that $f\geq e$, and  let $a_f\in A_f$. Then for all $b\in A_e$ we have 
\[
 \lim_i \,(a^i \cdot a_f)\cdot b = \lim_i \,a^i \cdot (a_f \cdot b) = a_f\cdot b,
\]
and $\lim_i \, a_f \cdot (a^i\cdot b)=a_f\cdot b$. Thus,
\begin{equation}\label{eq:reducedrepformula}
\lambda_{e,f}(a_f)=\lim_i a^i\cdot a_f=\lim_i a_f\cdot a^i
\end{equation}
where the limits in equation \eqref{eq:reducedrepformula} are taken in the strict topology of $\lin{A_e}$.

\medskip We recall that a  \emph{character} (sometimes called a semicharacter) on $E$ is a non-zero homomorphism from $E$ into the semilattice $ \{0,1\}$,
see e.g.~\cite{paterson}.

\begin{lemma}\label{semichar}
 Let $\pi:\CKS{\cE}\to\mathcal{B}(H)$ be a non-zero irreducible representation of $\CKS{\cE}$ on a Hilbert space $H$. 
 For $e \in E$, let $p_e$ denote the orthogonal projection of $H$ onto the norm-closure of $\pi(A_e\delta_e)H$ in $H$. Then $p_e\in\{0,I_H\}$. Moreover,  the map $\widehat\pi:E\to \{0,1\}$ defined by 
\[\widehat\pi(e) = \begin{cases} 1& \quad \text{if} \ p_e = I_H, \\ 0& \quad \text{if} \ p_e = 0,
\end{cases}\]  
is a character on $E$.
\end{lemma}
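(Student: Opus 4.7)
The plan is to deduce both claims from the single structural fact that $p_e$ lies in the commutant $\pi(\CKS{\cE})'$: irreducibility of $\pi$ then immediately gives $p_e\in\{0,I_H\}$, and multiplicativity of $\widehat{\pi}$ will follow from this together with a short density argument on the subspaces $p_eH$.

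For the commutation, since $\csp{\cE}$ is dense in $\CKS{\cE}$ and is linearly spanned by the elements of the form $b\delta_f$ with $f\in E$, $b\in A_f$, it suffices to show that each such $\pi(b\delta_f)$ commutes with $p_e$. Computing in $\csp{\cE}$ gives
\[\pi(b\delta_f)\pi(a\delta_e)\xi=\pi\big((b\cdot a)\delta_{fe}\big)\xi\quad\text{for all }a\in A_e,\ \xi\in H,\]
with $fe\leq e$ in $E$ and $b\cdot a\in A_{fe}$. The crucial subclaim is the monotonicity
\[\pi(A_g\delta_g)H\subseteq p_eH\quad\text{whenever }g\leq e\text{ in }E.\]
To establish it, I would take $c\in A_g$ and a bounded approximate unit $(u_i)$ for $A_g$, and use $eg=g$ together with (\ref{a-e-f}) to write $\big(j_{e,g}(c)\delta_e\big)(u_i\delta_g)=(c\cdot u_i)\delta_g$, which converges to $c\delta_g$ in $\CKS{\cE}$; hence $\pi(c\delta_g)\xi$ lies in $\overline{\pi(A_e\delta_e)H}=p_eH$ for every $\xi\in H$. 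Combining the monotonicity with the displayed product formula shows that $\pi(b\delta_f)$ carries $p_eH$ into itself; applying the same reasoning to $\pi(b\delta_f)^*=\pi(b^*\delta_f)$ settles invariance of the orthogonal complement, so $p_e$ commutes with $\pi(b\delta_f)$.

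For the character property, non-degeneracy of $\pi$ (forced by irreducibility together with $\pi\neq 0$) gives $\overline{\sum_{e\in E}\pi(A_e\delta_e)H}=H$, and since each $p_e$ has already been shown to be $0$ or $I_H$, at least one must equal $I_H$, so $\widehat\pi$ is non-zero. For multiplicativity, the monotonicity subclaim immediately yields $p_{ef}\leq p_e$ and $p_{ef}\leq p_f$, so $\widehat\pi(ef)=0$ whenever $\widehat\pi(e)=0$ or $\widehat\pi(f)=0$. If instead $p_e=p_f=I_H$, I would carry out a two-step approximation of an arbitrary $\xi\in H$: first by a finite sum $\sum_i\pi(a_i\delta_e)\xi_i$ with $a_i\in A_e$, then by replacing each $\xi_i$ with a sufficiently tight finite sum in $\pi(A_f\delta_f)H$, so that the identity $\pi(a_i\delta_e)\pi(b\delta_f)=\pi\big((a_i\cdot b)\delta_{ef}\big)$ places the resulting vector in $\pi(A_{ef}\delta_{ef})H$; this shows that $p_{ef}H$ is dense in $H$, hence $p_{ef}=I_H$.

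The main obstacle is the monotonicity subclaim. The delicate point is that $\Pi^0$ is only a pre-representation of $\cE$, so one cannot a priori identify $\pi(c\delta_g)$ with $\pi(j_{e,g}(c)\delta_e)$; the approximate-unit factorization combined with (\ref{a-e-f}) is precisely what allows us to realize $c\delta_g$ as a norm limit of products of elements from $A_e\delta_e$ and $A_g\delta_g$, and thereby to transfer the action into $p_eH$. Once this is in hand, the commutation argument and the multiplicativity of $\widehat\pi$ become essentially formal.
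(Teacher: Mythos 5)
Your proposal is correct and follows essentially the same route as the paper: both arguments hinge on the monotonicity $\overline{\pi(A_g\delta_g)H}\subseteq p_eH$ for $g\leq e$ (you obtain it via an approximate unit for $A_g$ and the factorization $(j_{e,g}(c)\delta_e)(u_i\delta_g)=(c\cdot u_i)\delta_g$ from (\ref{a-e-f}), while the paper uses an approximate unit for $A_e$ and the identity $j_{e,g}(u^i\cdot c)=u^i\cdot j_{e,g}(c)$ — a cosmetic difference), then deduce invariance of $p_eH$ from $\pi(b\delta_f)\pi(a\delta_e)=\pi((b\cdot a)\delta_{fe})$ and conclude $p_e\in\{0,I_H\}$ by irreducibility. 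Your two-step density argument for $p_{ef}=I_H$ when $p_e=p_f=I_H$ is likewise a minor variant of the paper's computation $p_{e'}p_e=p_{e'e}$, so the proof stands as written.
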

\begin{proof}
 Let $(a^i)$ be an approximate unit for $A_e$. It is straightforward to check that $\pi(a^i\delta_e)$ converges strongly to $p_e$. 
 
 Let $f\in E$ be such that $f\leq e$ and let $a\in A_f$. Since $j_{e,f}(a^i\cdot a)=a^i\cdot j_{e,f}(a)$, and $a^i\cdot j_{e,f}(a)$ converges to $j_{e,f}(a)$ in norm, it follows, using that $j_{e,f}$ is isometric, that $a^i\cdot a$ converges to $a$ in norm. Hence, for any $\xi\in H$, we have
 \[
  p_e\pi(a\delta_f)\xi = \lim_i \pi(a^i\delta_e)\pi(a\delta_f)\xi =\lim_i \pi(a^i\cdot a\delta_f)\xi=\pi(a\delta_f)\xi.
 \]
  It follows that $p_fH\subset p_eH$, that is, $p_f \leq p_e$.
  
  Consider now $e'\in E$. Then $e'e \leq e$, so $p_{e'e}H\subset p_eH$. 
  
  Hence for $a\in A_{e'}$ we get 
 \begin{align*}
  \pi(a\delta_{e'})p_eH&=\pi(a\delta_{e'})\overline{\pi(A_e\delta_e)H}\\
  &\subset\overline{\pi(A_{e'}\cdot A_e\delta_{e'e})H}\\&\subset \overline{\pi(A_{e'e}\delta_{e'e})H}\\&=p_{e'e}H\subset p_eH.
 \end{align*}
  This implies that $p_eH$ is a closed invariant subspace for $\pi(\CKS{\cE})$, hence that $p_e\in\{0,I_H\}$ since $\pi$ is irreducible. 
  Moreover, for any $e,e'\in E$, $p_{e'}p_e=p_{e}p_{e'}$ is then  a projection, and, as seen above,  we have $p_{e'e}\leq p_e$, and similarly $p_{e'e}=p_{ee'} \leq p_{e'}$, so $p_{e'e}\leq p_{e'}p_{e}$. 
  On the other hand, for $a\in A_{e'}$, we know that $\pi(a\delta_{e'}) p_{e}H\subset p_{e'e}H$. Hence, using an approximate unit for $A_{e'}$, one easily deduces that $p_{e'}p_{e}\leq p_{e'e}$. Thus we get $p_{e'e} = p_{e'}p_{e}$. Since $\pi$ is non-zero,
it clearly follows that $\widehat{\pi}$ is a character on $E$.  
\end{proof}
Given a character $\psi$ on $E$, we set $F_\psi = \{e\in E:\psi(e)=1\}$. Then $F_\psi$ is an example of a filter in $E$ (and every filter on $E$ can be obtained this way), cf.~\cite{paterson}. We recall that a filter in $E$ is a nonempty subsemilattice $F$ of $E$ such that if $f\in E$ and $f\geq e$ for some $e\in F$, then $f\in F$. 
\begin{proposition}\label{prop:universalKS}
 We have $\CrKS{\cE}=\CKS{\cE}$.
\end{proposition}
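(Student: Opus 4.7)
The plan is to show that the left regular representation $\Phi_\Lambda:\CKS{\cE}\to\CrKS{\cE}$ is isometric. Since $\Phi_\Lambda$ is surjective onto $\CrKS{\cE}$ by construction, and the inequality $\|\Phi_\Lambda(g)\|\leq\|g\|_{\rm u}$ is automatic, it suffices to prove $\|\pi(g)\|\leq \|\Phi_\Lambda(g)\|$ for every $g\in\csp{\cE}$ and every irreducible representation $\pi:\CKS{\cE}\to\mathcal{B}(H)$, and then pass to the supremum.

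Fix such a $\pi$ and write $g=\sum_{f\in E}a_f\delta_f$ with finite support. Lemma \ref{semichar} associates to $\pi$ a character $\widehat\pi$ with corresponding filter $F=F_{\widehat\pi}=\{f:p_f=I_H\}$. For $f\notin F$ we have $p_f=0$, so $\pi(a_f\delta_f)=0$. Setting $T=F\cap\mathrm{supp}(g)$ we obtain $\pi(g)=\sum_{f\in T}\pi(a_f\delta_f)$. If $T=\emptyset$ there is nothing to prove, so assume $T\neq\emptyset$ and define $e_0:=\prod_{f\in T}f$. Since $F$ is a subsemilattice we have $e_0\in F$, and by construction $e_0\leq f$ for every $f\in T$. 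The equality $p_{e_0}=I_H$ then means that the $*$-homomorphism $\pi_{e_0}:A_{e_0}\to\mathcal{B}(H)$, $a\mapsto\pi(a\delta_{e_0})$, is non-degenerate.

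The crux is to identify $\pi(g)$ as the image of $\Phi_{\Lambda^{e_0}}(g)$ under the canonical multiplier extension of $\pi_{e_0}$. Since $X_{e_0}=A_{e_0}$ and $\lin{A_{e_0}}\cong M(A_{e_0})$, the element $\Phi_{\Lambda^{e_0}}(g)$ lies in $M(A_{e_0})$; any $f\in\mathrm{supp}(g)$ with $e_0\leq f$ must belong to $F$ (as filters are upward closed) and hence to $T$, so $\Phi_{\Lambda^{e_0}}(g)=\sum_{f\in T}\lambda_{e_0,f}(a_f)$ acts on $A_{e_0}$ as left multiplication by the $a_f$'s. Let $\bar\pi_{e_0}:M(A_{e_0})\to\mathcal{B}(H)$ denote the unique unital extension of $\pi_{e_0}$. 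For $f\in T$ and $b\in A_{e_0}$ the pre-representation identity yields
\[
\bar\pi_{e_0}\bigl(\lambda_{e_0,f}(a_f)\bigr)\,\pi_{e_0}(b)=\pi_{e_0}(a_f\cdot b)=\pi\bigl((a_f\cdot b)\delta_{e_0}\bigr)=\pi(a_f\delta_f)\,\pi_{e_0}(b),
\]
and non-degeneracy of $\pi_{e_0}$ forces $\bar\pi_{e_0}(\lambda_{e_0,f}(a_f))=\pi(a_f\delta_f)$. Summing over $T$ gives $\pi(g)=\bar\pi_{e_0}(\Phi_{\Lambda^{e_0}}(g))$, whence
\[
\|\pi(g)\|\leq\|\Phi_{\Lambda^{e_0}}(g)\|\leq\|\Phi_\Lambda(g)\|,
\]
the last inequality being contractivity of the coordinate projection from $B=\prod_{e\in E}\lin{X_e}$.

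Taking the supremum over $\pi$ then yields $\|g\|_{\rm u}\leq\|\Phi_\Lambda(g)\|$, so $\Phi_\Lambda$ is isometric on $\csp{\cE}$ and hence on $\CKS{\cE}$ by density, proving $\CKS{\cE}=\CrKS{\cE}$. The conceptual hurdle — rather than any single computation — is recognizing that the filter attached to an irreducible representation supplies a canonical meet $e_0$ for which $\pi_{e_0}$ is non-degenerate, which is exactly what is needed to promote the pre-representation identities into an honest multiplier-algebra factorization through the $e_0$-coordinate of the left regular representation.
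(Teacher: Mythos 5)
Your proposal is correct and follows essentially the same route as the paper: reduce to irreducible representations, invoke Lemma \ref{semichar} to produce the filter $F$, kill the coefficients outside $F$, pick a lower bound $e_0\in F$ for the surviving support, and factor $\pi(g)$ through the extension of the non-degenerate representation $\pi_{e_0}$ to $\lin{A_{e_0}}$ applied to $\Phi_{\Lambda^{e_0}}(g)$. The only cosmetic difference is that the paper verifies the identity $\pi(g)=\overline{\pi_{e_0}}(\Phi_{\Lambda^{e_0}}(g))$ via strict limits of an approximate unit, whereas you derive it from the intertwining relation plus non-degeneracy; both are standard and equivalent.
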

\begin{proof}
 We only have to check that every irreducible representation of $\CKS{\cE}$ is dominated in norm by the left regular representation $\Phi_\Lambda$. More precisely,  it suffices to show that given a Hilbert space $H$ and a non-zero irreducible representation $\pi:\CKS{\cE}\to \mathcal{B}(H)$, we have \[\|\pi(g)\|\leq \|\Phi_\Lambda(g)\|\] for all $g\in \csp{\cE}$. Let $\widehat{\pi}$ be the character on $E$ described in Lemma \ref{semichar}, and let $F=F_{\widehat{\pi}}$ be the corresponding filter in $E$. Note that if $p_e$ is defined as in Lemma \ref{semichar}, then $p_e=I_H$ when $e\in F$, while $p_e = 0$ when $e\in E\setminus F$. Hence for all $e \in E\setminus F$   
and all $a_e\in A_e$ we have $\pi(a_e\delta_e)=0$. Indeed, letting $(a^i)$ be an approximate unit for $A_e$, we then have
 \[\pi(a_e\delta_e)\xi = \lim_i \pi(a_ea^i \delta_e)\xi = \lim_i \pi(a_e\delta_e)\pi(a^i\delta_e)\xi = \pi(a_e\delta_e)p_e\xi = 0\]
 for all $\xi \in H$.
Consider now $g =\sum_{u\in E} a_u\delta_u \in\csp{\cE}$. Using the observation we just made, we get
 \[\pi(g) =  \sum_{f\in K\cap F}\pi(a_f\delta_f),\]
 where $K:=\{u\in E:a_u\neq 0\}$ is finite. Since $F$ is a semilattice and $K\cap F$ is a finite subset of $F$, there exists some $e\in F$ such that $e\leq f$ for all $f\in K\cap F$. 
 Let $(a^i)$ be an approximate unit for $A_e$. Since the restriction $\pi_e$ of $\pi$ to $A_e\delta_e\simeq A_e$ is a non-degenerate representation of $A_e\delta_e$ on $H$, it may be extended to a representation $\overline{\pi_e}:\lin{A_e}\to\mathcal{B}(H)$
  (see for instance \cite[Theorem II.7.3.9]{blackadar}). 
  Moreover, if $(b^i\delta_e)$ is a net in $A_e\delta_e$ converging strictly to some $x\in\lin{A_e}$, 
  then $\pi_e(b^i\delta_e)$ converges strongly to $\overline{\pi_e}(x)$ 
  in $\mathcal{B}(H)$. Thus for any $\xi\in H$, using equations \eqref{eq:lamb-e-f} and \eqref{eq:reducedrepformula}, we get
 \begin{align*}
  \pi(g)\xi&=p_e\pi(g)\xi= \lim_i\pi(a^i\delta_e)\,\sum_{f\in K\cap F}\pi(a_f\delta_f)\xi\\
           &=\lim_i \sum_{f\in K \cap F} \pi\Big((a^i\cdot a_f)\delta_e\Big)\xi\\
           &=\sum_{f\in K\cap F} \overline{\pi_e}(\lambda_{e,f}(a_f))\xi\\
           &=\overline{\pi_e}(\Phi_{\Lambda^e}(g))\xi\,.
 \end{align*}
 It follows that $\pi(g)=\overline{\pi_e}(\Phi_{\Lambda^e}(g))$, so $\|\pi(g)\|\leq\|\Phi_{\Lambda^e}(g)\|\leq \|\Phi_\Lambda(g)\|$.
\end{proof}
Proposition \ref{prop:universalKS} implies that $\cM_\cE=\mathcal{I}_\cE$, hence that $C^*(\cE)=\CrAlt{\cE}$. Since it follows from \cite[Corollary 8.10]{exel11} that $C^*_r(\cE)\simeq C^*(\cE)$, we get:
\begin{corollary}
$ \CrAlt{\cE}=C^*(\cE)\simeq C^*_r(\cE).$
\end{corollary}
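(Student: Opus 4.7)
The plan is straightforward since the bulk of the work has already been carried out in Proposition~\ref{prop:universalKS}. The strategy is simply to combine that proposition with one known theorem of Exel. The first equality $\CrAlt{\cE} = C^*(\cE)$ is the content that genuinely needs a (brief) argument; the second isomorphism is immediate from the cited result.

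First I would restate Proposition~\ref{prop:universalKS} in the form that I actually need: the left regular representation $\Phi_\Lambda : \CKS{\cE} \to \CrKS{\cE}$ is an isometric $*$-isomorphism (it is surjective by the definition of $\CrKS{\cE}$ and injective by Theorem~\ref{theo:wording}, and the proposition says the two norms on $\csp{\cE}$ agree). Next I would unpack the definitions of the two relevant ideals: $\cM_\cE$ is the closure of $\cN_\cE$ inside $\CKS{\cE}$, while $\mathcal{I}_\cE = \overline{\Phi_\Lambda(\cN_\cE)}$ is its closure inside $\CrKS{\cE}$. Since $\Phi_\Lambda$ is isometric, it carries $\cM_\cE$ onto $\mathcal{I}_\cE$, and therefore descends to a $*$-isomorphism of the quotients $\CKS{\cE}/\cM_\cE \simeq \CrKS{\cE}/\mathcal{I}_\cE$. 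Combining this with the identification $C^*(\cE) \simeq \CKS{\cE}/\cM_\cE$ from \eqref{fullcross2} and with the definition $\CrAlt{\cE} = \CrKS{\cE}/\mathcal{I}_\cE$ yields the first claim $\CrAlt{\cE} \simeq C^*(\cE)$.

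For the second isomorphism I would simply cite \cite[Corollary 8.10]{exel11}, which asserts precisely that for a Fell bundle over a semilattice the canonical surjection $\Psi_{\Pi^{\rm red}} : C^*(\cE) \to C^*_r(\cE)$ is a $*$-isomorphism. Composing the two identifications gives the stated corollary.

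I do not anticipate any real obstacle, since the only non-trivial input (that irreducible representations of $\CKS{\cE}$ are dominated by $\Phi_\Lambda$) was settled in Proposition~\ref{prop:universalKS}, and the semilattice case of Exel's full-equals-reduced result is a black box. The only matter requiring a sentence of care is the passage from the equality $\CKS{\cE} = \CrKS{\cE}$ to the equality of ideals $\cM_\cE = \mathcal{I}_\cE$, but this is immediate from the fact that $\mathcal{I}_\cE$ was defined as the image-closure of $\cN_\cE$ under the isometric map $\Phi_\Lambda$.
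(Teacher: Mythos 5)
Your proposal is correct and follows essentially the same route as the paper: Proposition~\ref{prop:universalKS} gives $\CKS{\cE}=\CrKS{\cE}$, hence $\cM_\cE=\mathcal{I}_\cE$ and $C^*(\cE)=\CrAlt{\cE}$ via \eqref{fullcross2}, with the second isomorphism coming from \cite[Corollary 8.10]{exel11}. Your write-up merely spells out in more detail the identification of the ideals that the paper states in one line.
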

\section{Conditional Expectations onto the Diagonal}\label{cond-exp-diag}

Let $\cA=\{A_s\}_{s\in S}$ be a Fell bundle over $S$ and let $\cE=\{A_e\}_{e\in E}$ denote the Fell bundle  obtained by restricting $\cA$ to the semilattice $E$ of idempotents in $S$. Recall that $\CrKS{\cA}$ can be viewed as a $C^*$-subalgebra of $\prod_{e\in E}\lin{X_e}$ and that $\CrKS{\cE}$ can be viewed as a $C^*$-subalgebra of $\prod_{e\in E}\lin{A_e}$. When it is necessary to distinguish them, we will denote by $\Phi^\cA_{\Lambda}$ the left regular representation of $\csp{\cA}$ and by $\Phi^\cE_\Lambda$ the left regular representation of $\csp{\cE}$. Similarly, we will write $\{\lambda^\cA_{e,s}\}$ and $\{\lambda^\cE_{e,f}\}$ for the respective pre-representations. 

Let $e\in E$ and $s\in S_e$ (so that $s^*s=e$). Recalling that $A_s$ is a (right) $A_e$-module, we define an $A_e$-module map $\gamma_s:A_s\to X_e$ by $\gamma_s(a)=a\odot\varepsilon_s$, i.e.
\[
 \gamma_s(a)(t)=\begin{cases}
                   a & \mbox{ if }s=t\\
                   0   & \mbox{ otherwise.}
                  \end{cases}
\]
One readily checks that $\gamma_s$ is adjointable with adjoint given by $\gamma_s^*(\xi)=\xi(s)$ for every $\xi \in X_e$. Then $\gamma_s^*\gamma_s$ is clearly the identity map on $A_s$, so $\gamma_s$ is isometric. Moreover, we have $\gamma_s\gamma_s^*\xi = \xi(s)\odot \varepsilon_s$ for every $\xi \in X_e$, and it follows that $\sum_{s\in S_e}\gamma_s\gamma_s^*\xi=\xi$ for every $\xi\in X_e$, where the sum converges in the norm topology on $X_e$.

\begin{lemma}\label{lem:conditional0}
 Let $e\in E$, $s,t\in S_e$, $u\in S$ and $a\in A_u$. Then the map $\gamma_s^*\lambda_{e,u}(a)\gamma_t: A_t\to A_s$ is given by
 \[
  \big(\gamma_s^*\lambda_{e,u}(a)\gamma_t\big)(b)=\begin{cases}
                                          a\cdot b & \text{ if }u \geq st^*,\\
                                          0        & \text{ otherwise}
                                         \end{cases}
 \]
 for all $b\in A_t$.
\end{lemma}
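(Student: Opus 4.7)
My plan is to prove the lemma by a direct computation, unwinding the definition of each of the three maps in the composition $\gamma_s^*\lambda_{e,u}(a)\gamma_t$, and then identifying the resulting condition with the inequality $u \geq st^*$.

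First, I would fix $b\in A_t$ and compute $\gamma_t(b) = b\odot\varepsilon_t \in X_e$, which is the element of $X_e$ with value $b$ at the coordinate $t$ and zero elsewhere. Applying the definition of $\lambda_{e,u}(a)$ gives
\[
\big(\lambda_{e,u}(a)\gamma_t(b)\big)(v)=\begin{cases} a\cdot (b\odot\varepsilon_t)(u^*v) & \text{if } v\in D(u^*),\\ 0 & \text{otherwise,}\end{cases}
\]
which is nonzero only when $v\in D(u^*)$ and $u^*v=t$, in which case it equals $a\cdot b$. Then applying $\gamma_s^*$ (evaluation at $s$) yields
\[
\gamma_s^*\lambda_{e,u}(a)\gamma_t(b) \;=\; \begin{cases} a\cdot b & \text{if } s\in D(u^*) \text{ and } u^*s=t,\\ 0 & \text{otherwise.}\end{cases}
\]

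The main technical step is then the equivalence
\[
\big(s\in D(u^*)\ \text{and}\ u^*s=t\big) \;\Longleftrightarrow\; u\geq st^*,
\]
valid for $s,t\in S_e$. For the forward direction, from $ss^*\leq uu^*$ I would derive $uu^*ss^*=ss^*$, so $uu^*s=s$, and combining with $u^*s=t$ gives $ut=uu^*s=s$; then $st^*=utt^*=u\cdot(tt^*)$, showing $st^*\leq u$. For the reverse direction, $u\geq st^*$ means $st^*=u(tt^*)$ (recalling $(st^*)^*(st^*)=tt^*$ since $t\in S_e$); multiplying on the right by $t$ and using $t^*t=e$ gives $s=ut$. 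From $s=ut$, one checks $ss^*=utt^*u^*\leq uu^*$, so $s\in D(u^*)$; and $s^*s=e=t^*u^*ut$ forces $(tt^*)(u^*u)\,t=t$, which is equivalent to $tt^*\leq u^*u$, whence $u^*s=u^*ut=t$.

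The hard part will just be carrying out the second equivalence cleanly, since it is purely inverse-semigroup manipulation using only $s^*s=t^*t=e$ and the characterizations of the natural partial order recalled in Section~\ref{prem}; once that is established, the lemma follows directly by combining it with the explicit formula above.
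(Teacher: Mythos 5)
Your proposal is correct and takes essentially the same route as the paper: unwind the three maps to reduce the claim to the condition ``$s\in D(u^*)$ and $u^*s=t$'', and then show this is equivalent to $u\geq st^*$ by elementary manipulations with the natural partial order, using $s^*s=t^*t=e$. Your algebraic path for the equivalence (passing through $s=ut$) differs in minor details from the paper's (which works with $u^*\geq ts^*$), but both are valid.
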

\begin{proof}
 For $b\in A_t$ we have
 \begin{align*}
  \gamma_s^*\lambda_{e,u}(a)\gamma_t(b)&=\big(\lambda_{e,u}(a)\gamma_t(b)\big)(s)\\
  &=\begin{cases}
     a\cdot\gamma_t(b)(u^*s)   & \text{ if }s \in D(u^*),\\
     \hspace{6ex} 0                         & \text{ otherwise}
    \end{cases}\\
  &=\begin{cases}
     a\cdot b                  & \text{ if }u^*s=t\text{ and }ss^*\leq uu^*,\\
     \hspace{2ex} 0                         & \text{ otherwise.}
    \end{cases}
 \end{align*}
 Suppose first that $u\geq st^*$. Then $uu^*\geq (st^*)(st^*)^*=ss^*$ since $t^*t=s^*s$. Moreover, $u^*\geq ts^*$, so $ts^*=u^*(ts^*)^*(ts^*)=u^*ss^*$, hence $u^*s=ts^*s=tt^*t=t$. Conversely, if $ss^*\leq uu^*$ and $u^*s=t$, then \[u^*(ts^*)^*(ts^*)=u^*st^*ts^* = u^* uu^*ss^*ss^* =u^*ss^*=ts^*,\] so $u^*\geq ts^*$, hence $u\geq st^*$.
\end{proof}

\begin{lemma}\label{lem:factorize}
 Let $t\in S$ and $b\in A_t$. Then there exist $c\in A_{tt^*}$ and $d\in A_t$ such that  $b=c\cdot d$.
 \end{lemma}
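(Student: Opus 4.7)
The plan is to realize $A_t$ as a non-degenerate left Banach module over the $C^*$-algebra $A_{tt^*}$, and then invoke the Cohen--Hewitt factorization theorem exactly as was done in the proof of Lemma~\ref{lem:wordinghamsub}.

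First I would observe that the formula $(a,b)\mapsto a\cdot b$ makes sense as a map $A_{tt^*}\times A_t\to A_t$, because $tt^*\cdot t = t$ in $S$, so $a\cdot b\in A_{tt^*\cdot t}=A_t$. Together with axiom (viii), this equips $A_t$ with the structure of a left Banach module over the $C^*$-algebra $A_{tt^*}$.

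The key step is to check non-degeneracy, i.e.\ that $u_i\cdot b\to b$ in $A_t$ for every $b\in A_t$ and every bounded approximate unit $(u_i)$ of $A_{tt^*}$. Setting $c:=b\cdot b^*\in A_{tt^*}$ (which is positive by axiom (x) applied to $b^*\in A_{t^*}$), and applying axiom (ix) to $(u_i\cdot b - b)^*\in A_{t^*}$ to get $\|v\|^2=\|v\cdot v^*\|$ for $v\in A_t$, one computes
\begin{align*}
\|u_i\cdot b - b\|^2
&= \|(u_i\cdot b - b)\cdot (u_i\cdot b - b)^*\| \\
&= \|u_i\cdot c\cdot u_i - u_i\cdot c - c\cdot u_i + c\|,
\end{align*}
where all products on the right-hand side are computed inside the $C^*$-algebra $A_{tt^*}$. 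Since $(u_i)$ is an approximate unit of $A_{tt^*}$, this quantity tends to $0$, establishing non-degeneracy.

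With non-degeneracy in hand, the Cohen--Hewitt factorization theorem (cf.\ \cite[Theorem 32.22]{hewitt_ross70}) applied to the left Banach $A_{tt^*}$-module $A_t$ immediately yields elements $c\in A_{tt^*}$ and $d\in A_t$ with $b=c\cdot d$, which is the desired conclusion. There is no real obstacle here; the only mild subtlety is being careful about which bundle fibre each product lives in when verifying the non-degeneracy computation, but the identity $tt^*\cdot tt^*=tt^*$ keeps everything inside $A_{tt^*}$ on the right-hand side.
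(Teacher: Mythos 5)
Your proposal is correct and follows essentially the same route as the paper: take an approximate unit $(u_i)$ of $A_{tt^*}$, use the C$^*$-identity $\|v\|^2=\|v\cdot v^*\|$ (from axiom (ix) applied to $v^*\in A_{t^*}$) to reduce the convergence $u_i\cdot b\to b$ to the statement that $(u_i)$ is an approximate unit acting on $b\cdot b^*\in A_{tt^*}$, and then invoke Cohen--Hewitt for the left Banach $A_{tt^*}$-module $A_t$. Your expanded expression $\|u_i\cdot c\cdot u_i-u_i\cdot c-c\cdot u_i+c\|$ is in fact written more carefully than the paper's own display, and the conclusion is the same.
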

\begin{proof}
Let $(u^i)$ be an approximate unit for  $A_{tt^*}$. Since $||u^i||\leq 1$ for all $i$, we get
 \begin{align*}
  ||u^i\cdot b-b||^2&=||(u_i\cdot b-b)(u_i\cdot b-b)^*||\\
  &=||u_i\cdot b\cdot b^*-b\cdot b^*+(u_i\cdot b\cdot b^*-b\cdot b^*)\cdot u_i||\\
  &\leq 2||u_i\cdot b\cdot b^*-b\cdot b^*||.
 \end{align*}
So $u^i\cdot b$ converges to $b$. Regarding $A_t$ as a left $A_{tt^*}\,$-module in the obvious way, we may then apply the Cohen-Hewitt factorization theorem \cite[Theorem~32.22]{hewitt_ross70} to $b$ and deduce that $b=c\cdot d$ for some $c\in A_{tt^*}$ and $d\in A_t$.
\end{proof}

\begin{lemma}\label{lem:fourierfactorize}
 Let $t\in S$ and $b\in A_t$. Let $b=c\cdot d$ be any factorization of $b$ with $c\in A_{tt^*}$ and $d\in A_t$. Let $(T_e)_{e\in E}\in\CrKS{\cA}$, and let $s\in S$ be such that $s^*s=t^*t$. Then
 \[
  (\gamma_s^*T_{t^*t}\gamma_t)(b)= (\gamma_{st^*}^*T_{tt^*}\gamma_{tt^*})(c)\cdot d.
 \]
\end{lemma}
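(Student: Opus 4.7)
The plan is to reduce the identity to the dense subalgebra $\Phi_\Lambda^\cA(\csp{\cA})$ and then invoke Lemma \ref{lem:conditional0} twice.

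First, I would observe that $s\in S_{t^*t}$ (by hypothesis $s^*s=t^*t$), that $t\in S_{t^*t}$, and that
\[(st^*)^*(st^*) = ts^*st^* = tt^*tt^* = tt^*,\]
so $st^* \in S_{tt^*}$; moreover $tt^*\in S_{tt^*}$. Hence the four adjointable maps $\gamma_s$, $\gamma_t$, $\gamma_{st^*}$, $\gamma_{tt^*}$ all make sense, and both sides of the asserted identity define bounded linear maps of $(T_e)_{e\in E}\in \CrKS{\cA}$ into $A_s$: namely
\[(T_e)_{e\in E}\ \longmapsto\ \gamma_s^*T_{t^*t}\gamma_t(b)\quad\text{and}\quad (T_e)_{e\in E}\ \longmapsto\ \gamma_{st^*}^*T_{tt^*}\gamma_{tt^*}(c)\cdot d.\]
Since $\CrKS{\cA}$ is by definition the norm-closure of $\Phi_\Lambda^\cA(\csp{\cA})$, by continuity it suffices to verify the identity when $(T_e)_{e\in E}=\Phi_\Lambda^\cA(g)$ for a single element $g=\sum_{u\in S}a_u\delta_u\in\csp{\cA}$ (finite support), in which case $T_e=\sum_{u\in S}\lambda^\cA_{e,u}(a_u)$ for each $e\in E$.

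For such a $g$, I would compute both sides using Lemma \ref{lem:conditional0}. On the left, applying that lemma with the pair $(s,t)$ (both in $S_{t^*t}$) gives
\[\gamma_s^*T_{t^*t}\gamma_t(b)=\sum_{u\in S}\gamma_s^*\lambda^\cA_{t^*t,u}(a_u)\gamma_t(b)=\sum_{u\ge st^*}a_u\cdot b.\]
On the right, applying the lemma with the pair $(st^*,tt^*)$ (both in $S_{tt^*}$) gives
\[\gamma_{st^*}^*T_{tt^*}\gamma_{tt^*}(c)\cdot d=\bigg(\sum_{u\in S}\gamma_{st^*}^*\lambda^\cA_{tt^*,u}(a_u)\gamma_{tt^*}(c)\bigg)\cdot d=\sum_{u\ge (st^*)(tt^*)^*}a_u\cdot c\cdot d.\]

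Finally I would note the elementary computation $(st^*)(tt^*)^* = st^*tt^* = st^*$, so the two index sets coincide; and since $b=c\cdot d$, the two displayed sums are equal. This is essentially the only bit of algebra in the argument, and it is the point where the particular factorization $b=c\cdot d$ with $c\in A_{tt^*}$ is used: the left-hand projection $\gamma_s^*\cdots\gamma_t$ involves $t^*t$, while the right-hand one involves $tt^*$, and the switch between them is exactly absorbed by multiplying $c$ on the right by $d$. No further estimates are needed since everything on both sides is already expressed as a finite sum after the reduction. The only step that requires a moment's care is the continuity reduction at the beginning, which follows because $\gamma_s,\gamma_t,\gamma_{st^*},\gamma_{tt^*}$ are bounded adjointable operators and the coordinate projections $(T_e)_{e\in E}\mapsto T_{t^*t}$ and $(T_e)_{e\in E}\mapsto T_{tt^*}$ are contractive.
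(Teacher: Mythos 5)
Your proof is correct and follows the same route as the paper: reduce by linearity and continuity to elements of $\Phi_\Lambda^\cA(\csp{\cA})$, apply Lemma~\ref{lem:conditional0} to both sides, and observe that $(st^*)(tt^*)^*=st^*$ so the two conditions on $u$ coincide, with $b=c\cdot d$ closing the argument. You have merely written out the details that the paper leaves implicit, so there is nothing to add.
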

\begin{proof}
Note first that the expression on the right-hand side is well-defined since $(st^*)^*(st^*)= t s^*st^*= tt^*$ as $s^*s =t^*t$.
By linearity and continuity, it suffices to prove that for any $u\in S$ and $a\in A_u$, we have
 \[
  (\gamma_s^*\lambda_{t^*t,u}(a)\gamma_t)(b)= (\gamma_{st^*}^*\lambda_{tt^*,u}(a)\gamma_{tt^*})(c)\cdot d.
 \]
 This follows immediately by applying Lemma~\ref{lem:conditional0} to both sides, and using that $st^*(tt^*)^*=st^*$.
\end{proof}

\begin{lemma}\label{lem:conditional1}
For any $e,f\in E$, and $a_f \in A_f$ we have
\[
 \gamma_e^*\lambda^\cA_{e,f}(a_f)\gamma_e = \lambda^\cE_{e,f}(a_f).
\]
Moreover, if $S$ is $E^*$-unitary and $A_0=\{0\}$ (if $S$ has a $0$-element), then for any $e\in E$, $u\in S$ and $a_u\in A_u$, we have
 \begin{equation}\label{gamma-e-u}
 \gamma_e^*\lambda^\cA_{e,u}(a_u)\gamma_e = \begin{cases}
                                         \lambda^\cE_{e,u}(a_u)\quad&\mbox{ if } u\in E,\\
                                         \hspace{3ex}0 & \mbox{ otherwise.}
                                        \end{cases}
\end{equation}
\end{lemma}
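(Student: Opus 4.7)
The approach is to deduce both parts directly from Lemma~\ref{lem:conditional0} by specializing to $s=t=e$. Since $e^*e=e$, the element $e$ lies in $S_e$, and with this choice one has $st^*=ee^*=e$. Thus Lemma~\ref{lem:conditional0} yields, for every $u\in S$ and $a_u\in A_u$, that $\gamma_e^*\lambda^\cA_{e,u}(a_u)\gamma_e:A_e\to A_e$ is the map sending $b$ to $a_u\cdot b$ when $u\geq e$, and to $0$ otherwise.

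For the first claim, take $u=f\in E$ and compare the resulting formula with the definition of $\lambda^\cE_{e,f}(a_f)$ in \eqref{eq:lamb-e-f}: the two maps agree in both cases ($f\geq e$ and $f\not\geq e$), giving $\gamma_e^*\lambda^\cA_{e,f}(a_f)\gamma_e=\lambda^\cE_{e,f}(a_f)$.

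For the second claim, I would separate two situations. If $e=0$ (which is only possible when $S$ has a zero), the hypothesis $A_0=\{0\}$ forces $\gamma_e$ to be the zero map, so both sides of \eqref{gamma-e-u} vanish trivially. If $e\in E^\times$, I split according to whether $u\geq e$: when $u\geq e$, $E^*$-unitarity of $S$ forces $u\in E$, and the first claim applies with $f=u$ to yield the top branch of \eqref{gamma-e-u}; when $u\not\geq e$, the left-hand side is zero by the preliminary observation, and the right-hand side is also zero, either by definition (if $u\notin E$) or by \eqref{eq:lamb-e-f} (if $u\in E$ but $u\not\geq e$).

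There is no real obstacle here: the lemma merely repackages Lemma~\ref{lem:conditional0} under the $E^*$-unitarity hypothesis, which ensures that the only elements of $S$ dominating a nonzero idempotent are themselves idempotent. The only mildly delicate point is treating the zero idempotent separately, which is handled at once by the standing assumption $A_0=\{0\}$.
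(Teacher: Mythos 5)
Your proof is correct and follows essentially the same route as the paper: both specialize Lemma~\ref{lem:conditional0} to $s=t=e$ (so that $st^*=e$) and then use $E^*$-unitarity to reduce the condition $e\leq u$ to $u\in E$ when $e\neq 0$, handling $e=0$ separately via $A_0=\{0\}$. The only cosmetic difference is that you prove the first claim explicitly and deduce the second from it, whereas the paper proves the second and leaves the first as a small adjustment.
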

\begin{proof}
 We prove the second statement. The proof of the first statement follows from a small adjustment to the argument and is left to the reader. Assume that $S$ is $E^*$-unitary  and $A_0=\{0\}$ (if $S$ has a $0$-element). Let $b\in A_e$. Lemma~\ref{lem:conditional0} gives that
 \begin{equation}\label{gamma-e}
 \big(\gamma_e^*\lambda^\cA_{e,u}(a_u)\gamma_e\big)(b)= 
  \begin{cases}
    a_u\cdot b & \text{if } e\leq u,\\
    \hspace{2ex}0 & \text{otherwise. }
  \end{cases}
\end{equation}
Since $S$ is $E^*$-unitary, $e\leq u$ implies that $u$ is idempotent or $e=0$. If $e\neq 0$, the right hand side of (\ref{gamma-e}) is equal to
 \[
 \begin{cases}
  a_u \cdot b & \text{if }e\leq u\text{ and }u\in E,\\
  \hspace{2ex}0 & \text{ otherwise, }
 \end{cases}\\
\ = \
 \begin{cases}
  \lambda^\cE_{e,u}(a_u)b     & \text{ if } u\in E,\\
                 \hspace{3ex}      0 & \text{ otherwise,}
 \end{cases}
 \]
so we see that (\ref{gamma-e-u}) holds in this case. If $e=0$  (so $S$ has a $0$-element), then 
both sides of (\ref{gamma-e-u}) are equal to $0$ 
since $A_0=\{0\}$ by assumption.
\end{proof}
\begin{lemma}\label{lem:diagembedding}
 There is an embedding of $\CrKS{\cE}$ into $\CrKS{\cA}$ extending the inclusion $\csp{\cE}\subset\csp{\cA}$.
\end{lemma}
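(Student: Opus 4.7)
The plan is to show that the natural embedding $\csp{\cE}\hookrightarrow\csp{\cA}$, followed by the canonical map $\csp{\cA}\to\CrKS{\cA}$, is isometric with respect to the $\CrKS{\cE}$-norm on $\csp{\cE}$, that is, that $\|\Phi^\cA_\Lambda(g)\|=\|\Phi^\cE_\Lambda(g)\|$ for every $g=\sum_{f\in E}a_f\delta_f\in\csp{\cE}$. Once this is established, the desired embedding of $\CrKS{\cE}$ into $\CrKS{\cA}$ is obtained by continuous extension.

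The inequality $\|\Phi^\cE_\Lambda(g)\|\leq\|\Phi^\cA_\Lambda(g)\|$ follows immediately from the first statement of Lemma~\ref{lem:conditional1}: for each $e\in E$, compression by the isometry $\gamma_e$ gives $\gamma_e^*\,\Phi^\cA_{\Lambda^e}(g)\,\gamma_e=\Phi^\cE_{\Lambda^e}(g)$, so $\|\Phi^\cE_{\Lambda^e}(g)\|\leq \|\Phi^\cA_{\Lambda^e}(g)\|$ for each $e$, and taking suprema yields the claim.

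The reverse inequality is the main content. I would fix $e\in E$ and exploit the Hilbert $A_e$-module direct sum decomposition $X_e=\bigoplus_{s\in S_e}\gamma_s(A_s)$. The key claim is that, for every $g\in\csp{\cE}$, each summand $\gamma_s(A_s)$ is invariant under $\Phi^\cA_{\Lambda^e}(g)$, and the corresponding restriction, viewed through $\gamma_s$, is the operator $T^{(e)}_s: A_s\to A_s$ given by
\[T^{(e)}_s(b)\,=\sum_{f\in E,\,f\geq ss^*}a_f\cdot b.\]
To prove the claim, one applies the definition of $\lambda^\cA_{e,f}$ to $\gamma_s(b)=b\odot\varepsilon_s$ and inspects the $t$-component (for $t\in S_e$): the conditions $ft=s$ and $tt^*\leq f$, combined with $s,t\in S_e$, force $s=t$ and $f\geq ss^*$, since the natural partial order on $S$ is trivial on each $S_e$. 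With this decomposition in hand, the standard norm formula for direct sums of adjointable operators on a Hilbert module direct sum gives $\|\Phi^\cA_{\Lambda^e}(g)\|=\sup_{s\in S_e}\|T^{(e)}_s\|$. Finally, setting $\pi^s_f(a_f)b=a_f\cdot b$ when $f\geq ss^*$ and $0$ otherwise defines a pre-representation $\pi^s=\{\pi^s_f\}_{f\in E}$ of $\cE$ in $\lin{A_s}$ (with $A_s$ viewed as a right Hilbert $A_e$-module, $e=s^*s$; the adjointability uses that $f\geq ss^*$ implies $fs=s$, so $a_f^*\cdot c\in A_s$ for $c\in A_s$). Then $T^{(e)}_s=\Phi_{\pi^s}(g)$, and hence
\[\|T^{(e)}_s\|\,\leq\,\|g\|_{\CKS{\cE}}\,=\,\|g\|_{\CrKS{\cE}}\,=\,\|\Phi^\cE_\Lambda(g)\|,\]
where the first equality is Proposition~\ref{prop:universalKS}. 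Taking the supremum over $s\in S_e$ and then over $e\in E$ delivers the reverse inequality.

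The main technical hurdle is the decomposition step, namely verifying that $\Phi^\cA_{\Lambda^e}(g)$ preserves each summand $\gamma_s(A_s)$ when $g\in\csp{\cE}$. This rests entirely on the triviality of the partial order on each $S_e$, which collapses the sum over $t\in S_e$ appearing in the $t$-components of $\lambda^\cA_{e,f}(a_f)(b\odot\varepsilon_s)$ to the single index $t=s$. Once this is available, the remainder is a straightforward combination of Lemma~\ref{lem:conditional1} and Proposition~\ref{prop:universalKS}.
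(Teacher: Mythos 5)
Your proposal is correct, and one half of it coincides with the paper's argument: the inequality $\|\Phi^\cE_\Lambda(g)\|\leq\|\Phi^\cA_\Lambda(g)\|$ is obtained in both cases by compressing with the isometries $\gamma_e$ and invoking Lemma~\ref{lem:conditional1}. Where you diverge is the reverse inequality. The paper disposes of it in one line: the restriction of $\Phi^\cA_\Lambda$ to $\csp{\cE}$ is itself a $*$-homomorphism of $\csp{\cE}$ into the $C^*$-algebra $\prod_{e\in E}\lin{X_e}$, hence its norm is dominated by the universal norm $\|\cdot\|_{\rm u}$ on $\csp{\cE}$, which equals $\|\Phi^\cE_\Lambda(\cdot)\|$ by Proposition~\ref{prop:universalKS}. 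You instead decompose $X_e=\bigoplus_{s\in S_e}\gamma_s(A_s)$, verify that $\Phi^\cA_{\Lambda^e}(g)$ acts block-diagonally with $s$-block $b\mapsto\sum_{f\geq ss^*}a_f\cdot b$, recognize each block as $\Phi_{\pi^s}(g)$ for an explicit pre-representation $\pi^s$ of $\cE$ in $\lin{A_s}$, and only then invoke the universal property together with Proposition~\ref{prop:universalKS}. Your computation of the blocks is correct (the reduction to $t=s$ follows already from $tt^*\leq f\Rightarrow ft=t$, so the appeal to triviality of the order on $S_e$ is a valid but slightly roundabout way to see it), and the block-diagonal norm formula is legitimate since $\sum_{s\in S_e}\gamma_s\gamma_s^*=\mathrm{id}_{X_e}$ strictly. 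So your route buys an explicit description of how the diagonal acts on $X_e$ (which is in the spirit of Lemma~\ref{lem:conditional0} and is reused elsewhere in the paper), at the cost of extra verifications that the paper's observation renders unnecessary: once you know $\Phi^\cA_\Lambda|_{\csp{\cE}}$ lands in a $C^*$-algebra, the universal norm already dominates it without any decomposition.
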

\begin{proof}
 Let $\sum_{f\in E}a_f\delta_f\in\csp{\cE}$. We need to prove that
 \begin{equation} \label{Phi-E}
 \big\|\Phi^\cE_\Lambda\Big(\sum_{f\in E}a_f\delta_f\Big)\big\|\, =\, \big\|\Phi^\cA_\Lambda\Big(\sum_{f\in E}a_f\delta_f\Big)\big\|.
 \end{equation}
 Since $\CrKS{\cE}=\CKS{\cE}$, cf.~Proposition \ref{prop:universalKS}, the expression on the left-hand side of (\ref{Phi-E}) is the same as the universal
 norm of $\sum_{f\in E}a_f\delta_f$ in  $\CKS{\cE}$. As  $\Phi^\cA_\Lambda$ restricts to a $*$-homomorphism of $C_c(\cE)$, we see that  the $\geq$ inequality in (\ref{Phi-E}) must hold.
 On the other hand, since $\gamma_e$ is an isometry for each $e\in E$, Lemma \ref{lem:conditional1} implies that
 \[
  \sup_{e\in E}\big\|\sum_{f\in E}\lambda^\cE_{e,f}(a_f)\big\|\,\leq\,\sup_{e\in E}\big\| \sum_{f\in E}\lambda^\cA_{e,f}(a_f)\big\|.
 \]
 This shows that $\leq$ inequality in (\ref{Phi-E}) holds.
\end{proof}

We will identify $\CrKS{\cE}$ with its canonical image in $\CrKS{\cA}$, and call it the \emph{diagonal} ($C^*$-subalgebra) of $\CrKS{\cA}$. 
Define $\fE:\csp{\cA}\to\csp{\cE}$ by
\begin{equation*}
 \fE\Big(\sum_{u\in S}a_u\delta_u\Big)=\sum_{e\in E}a_e\delta_e
\end{equation*}
for all $\sum_{u\in S}a_u\delta_u\in \csp{\cA}$. Moreover, define a positive linear map  $\fEKS$ from $\CrKS{\cA}$ into $\prod_{e\in E}\lin{A_e}$ by \[\fEKS\big((T_e)_{e\in E}\big)=(\gamma_e^*T_e\gamma_e)_{e\in E}\]
for all $(T_e)_{e\in E} \in \CrKS{\cA}$.

\begin{proposition}\label{condi-exp}
 The map $\fEKS:\CrKS{\cA}\to\prod_{e\in E}\lin{A_e}$ is faithful. 
 
 If $S$ is $E^*$-unitary and $A_0=\{0\}$ $($if $S$ has a $0$-element$)$, then $\fEKS$ satisfies
 \begin{equation}\label{eq:conditionalformula}
 \fEKS\Big(\Phi^{\cA}_\Lambda(g)\Big)=\Phi^\cE_\Lambda\Big(\fE(g)\Big)
 \end{equation}
 for all $g\in \csp{\cA}$. Moreover, in this case, $\fEKS$ is a faithful conditional expectation from $\CrKS{\cA}$ onto $\CrKS{\cE}$.
\end{proposition}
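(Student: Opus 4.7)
My plan is to prove faithfulness first (which needs no extra hypothesis on $S$), then verify the formula $\fEKS\circ\Phi^{\cA}_\Lambda = \Phi^{\cE}_\Lambda\circ\fE$ by a direct computation leaning on Lemma~\ref{lem:conditional1}, and finally deduce the conditional expectation property via Tomiyama's theorem.

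For faithfulness, suppose $T=(T_e)_{e\in E}\in \CrKS{\cA}$ satisfies $\fEKS(T^*T)=0$, i.e.\ $\gamma_e^*T_e^*T_e\gamma_e = 0$ for every $e\in E$. The crucial observation is that Lemma~\ref{lem:fourierfactorize}, applied to $T^*T\in \CrKS{\cA}$ with $s=t$, converts every compression of the form $\gamma_t^*(T^*T)_{t^*t}\gamma_t$ into a diagonal one. Indeed, for $t\in S$ and $b\in A_t$, Lemma~\ref{lem:factorize} yields $c\in A_{tt^*}$ and $d\in A_t$ with $b=cd$, so
\[
\bigl(\gamma_t^*(T^*T)_{t^*t}\gamma_t\bigr)(b) \,=\, \bigl(\gamma_{tt^*}^*(T^*T)_{tt^*}\gamma_{tt^*}\bigr)(c)\cdot d \,=\, 0,
\]
since the right-hand factor is the $tt^*$-coordinate of $\fEKS(T^*T)$, which vanishes by hypothesis. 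Because $\gamma_t$ is adjointable in the Hilbert $A_{t^*t}$-module $X_{t^*t}$, we then obtain
\[
\langle T_{t^*t}\gamma_t(b),\, T_{t^*t}\gamma_t(b)\rangle_{t^*t} \,=\, b^*\cdot\gamma_t^*(T^*T)_{t^*t}\gamma_t(b) \,=\, 0,
\]
so $T_{t^*t}\gamma_t(b)=0$ for all $t\in S_e$ and all $b\in A_t$. The norm-convergent identity $\sum_{t\in S_e}\gamma_t\gamma_t^* = \mathrm{id}_{X_e}$ recalled at the start of the section then forces $T_e=0$ for each $e\in E$, hence $T=0$.

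Now assume $S$ is $E^*$-unitary and $A_0=\{0\}$ (if $S$ has a zero). For $g=\sum_u a_u\delta_u\in \csp{\cA}$ and $e\in E$, a direct linear computation gives
\[
\gamma_e^*\Phi^{\cA}_\Lambda(g)_e\gamma_e \,=\, \sum_{u\in S}\gamma_e^*\lambda^{\cA}_{e,u}(a_u)\gamma_e \,=\, \sum_{u\in E}\lambda^{\cE}_{e,u}(a_u) \,=\, \Phi^{\cE}_\Lambda(\fE(g))_e\,,
\]
where the middle equality is precisely the content of Lemma~\ref{lem:conditional1}; this verifies the displayed formula.

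Finally, the estimate $\|\gamma_e^*T_e\gamma_e\|\le\|T_e\|\le\|T\|$ shows $\fEKS$ is contractive, and coupled with the above formula and the density of $\csp{\cA}$ in $\CrKS{\cA}$ yields $\fEKS(\CrKS{\cA})\subset \CrKS{\cE}$, viewed inside $\CrKS{\cA}$ via the embedding of Lemma~\ref{lem:diagembedding}. For $h\in\csp{\cE}$ we have $\fE(h)=h$, so $\fEKS(\Phi^{\cA}_\Lambda(h))=\Phi^{\cE}_\Lambda(h)$, which under that embedding is identified with $\Phi^{\cA}_\Lambda(h)$ itself; hence $\fEKS$ fixes the dense subset $\csp{\cE}$, and by continuity $\fEKS|_{\CrKS{\cE}}=\mathrm{id}$. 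Tomiyama's theorem then upgrades $\fEKS$ to a completely positive conditional expectation onto $\CrKS{\cE}$, and faithfulness was already established above. The main obstacle is the faithfulness step, since the hypothesis $\fEKS(T^*T)=0$ provides only diagonal information while the conclusion concerns all of $T$; it is the clever use of Lemma~\ref{lem:fourierfactorize} with $s=t$ that bridges the gap by expressing every off-diagonal compression as a diagonal one.
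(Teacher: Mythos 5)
Your proof is correct and follows essentially the same route as the paper: Lemma~\ref{lem:factorize} and Lemma~\ref{lem:fourierfactorize} to reduce the hypothesis $\fEKS(T^*T)=0$ to the vanishing of the relevant compressions, the norm-convergent resolution of the identity $\sum_{t\in S_e}\gamma_t\gamma_t^*=\mathrm{id}_{X_e}$ to reassemble $T_e$, Lemma~\ref{lem:conditional1} for the covariance formula, and Tomiyama's theorem for the conditional-expectation property. Your faithfulness step is a valid minor streamlining: applying Lemma~\ref{lem:fourierfactorize} to $T^*T$ with $s=t$ yields $T_{t^*t}\gamma_t=0$ in one stroke via the inner product, whereas the paper first extracts $\gamma_s^*T_e\gamma_e=0$ from the positivity identity and then applies the factorization lemmas to $T$ itself to kill all compressions $\gamma_s^*T_e\gamma_t$.
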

\begin{proof}
Let  $e\in E$, $T_e\in\lin{X_e}$ and $a\in A_e$. For each $s\in S_e$,  we have  \[(\gamma_s^*T_e\gamma_e)(a) = \big(T_e\gamma_e(a)\big)(s),\] so we get
\begin{align*}
 \Big\langle (\gamma_e^*T_e^*T_e\gamma_e)(a),a\Big\rangle_{A_e} &= \Big\langle T_e\gamma_e(a),T_e\gamma_e(a)\Big\rangle_{X_e}\\
 &=\sum_{s\in S_e} \big(T_e\gamma_e(a)\big)(s)^*\big(T_e\gamma_e(a)\big)(s)\\
 &=\sum_{s\in S_e} \big(\gamma_s^*T_e\gamma_e\big)(a)^*\big(\gamma_s^*T_e\gamma_e\big)(a).
\end{align*}
So we see that if $\gamma_e^*T_e^*T_e\gamma_e=0$, then $\gamma_s^*T_e\gamma_e=0$ for each $s\in S_e$. 

Consider $T=(T_e)_{e\in E}\in \CrKS{\cA}$. If $\gamma_s^*T_e\gamma_e=0$ for all $e\in E$ and $s\in S_e$, then for any $e\in E$ and $s,t\in S_e$, we have in particular that $\gamma_{st^*}^*T_{tt^*}\gamma_{tt^*}=0$, so Lemma~\ref{lem:factorize} and Lemma~\ref{lem:fourierfactorize} imply that $\gamma_s^*T_e\gamma_t=0$. Combining this with our first observation, we get that if $\gamma_e^*T_e^*T_e\gamma_e=0$ for each $e\in E$, then $\gamma_s^*T_e\gamma_t=0$ for each $e\in E$ and $s,t\in S_e$. 

Assume now that $\fEKS(T^*T) = 0$. This means that $\gamma_e^*T_e^*T_e\gamma_e=0$ for all $e\in E$. Hence, for  $e\in E$ and $\xi\in X_e$, we get
\[
 T_e\xi = \sum_{s\in S_e}  \gamma_s\gamma_s^*T_e\xi = \sum_{s\in S_e}\sum_{t\in S_e}\gamma_s\gamma_s^*T_e\gamma_t\gamma_t^*\xi=0.
\]
Thus $T_e=0$ for every $e\in E$, so $T=0$. This proves that $\fEKS$ is faithful.

Next, assume that $S$ is $E^*$-unitary and $A_0=\{0\}$ (if $S$ has a $0$-element). To show that \eqref{eq:conditionalformula} holds amounts to show that for any $e\in E$, we have
\[
\gamma_e^*\left(\sum_{u\in S}\lambda_{e,u}(a_u)\right)\gamma_e=\sum_{f\in E}\lambda_{e,f}(a_f).
\]
 for all $\sum_{u\in S}a_u\delta_u\in \csp{\cA}$. This follows readily from Lemma \ref{lem:conditional1}. It is then clear that the image of $\fEKS$ is $\CrKS{\cE}$. 
Note also  that $\fEKS$ is contractive since $\gamma_e$ is an isometry for each $e\in E$. 
Moreover, it is immediate from \eqref{eq:conditionalformula} that $\fEKS$ is a projection map. 
Hence, Tomiyama's theorem (see for instance \cite[Theorem II.6.10.2]{blackadar}) gives that  
$\fEKS$ is a conditional expectation.
 \end{proof}

\begin{remark}\label{groupbundle}
Suppose that $S$ is strongly $E^*$-unitary and and  $A_0=\{0\}$ (if $S$ has a zero). Let $\sigma$ be  an idempotent pure grading from $S^\times$ into a group $G$. Then for each $g\in G$ one can form the Banach space
\[
 B_g := \overline{\bigoplus_{s\in S,\,\sigma(s)=g}\Phi_\Lambda(A_s\delta_s)}\subset \CrKS{\cA}.
\]
It is straightforward to check that $\mathcal{B}:=\{B_g\}_{g\in G}$ is a Fell bundle over $G$, giving a $G$-grading for $\CrKS{\cA}$  in the sense of \cite[Definition 16.2]{exel14}. Moreover, since $\sigma$ is idempotent pure, we have $\{s\in S: \sigma(s) =1_G\}=E$, so $B_{1_G}=\CrKS{\cE}$. Since $\fEKS$ is faithful 
by the previous proposition, it then follows from \cite[Proposition 19.8]{exel14} that $\CrKS{\cA}$ is naturally isomorphic to the reduced cross sectional $C^*$-algebra $C^*_r(\mathcal{B})$ associated with $\mathcal{B}$.
\end{remark}

The following covariance property of $\fEKS$ will be useful later.

\begin{lemma}\label{covar}
Suppose $S$ is $E^*$-unitary and $A_0=\{0\}$ (if $S$ has a $0$ element). Then for all $s\in S$, $b\in A_s$ and $T\in\CrKS{\cA}$ we have
\begin{equation}\label{eq:covar}
\fEKS\big(\lambda_s(b)^*T\lambda_s(b)\big)=\lambda_s(b)^*\fEKS(T)\lambda_s(b).
\end{equation}
\end{lemma}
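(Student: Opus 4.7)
The plan is to reduce, by norm-continuity and linearity, to verifying the identity on monomials $T=\lambda_u(a_u)$, apply the explicit formula \eqref{eq:conditionalformula}, and then dispose of the one remaining case via a purely semigroup-theoretic consequence of the $E^*$-unitary hypothesis.

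Since both sides of \eqref{eq:covar} are bounded and linear in $T$, and $\Phi^\cA_\Lambda(\csp{\cA})$ is dense in $\CrKS{\cA}$, it suffices to treat $T=\lambda_u(a_u)$ for $u\in S$ and $a_u\in A_u$. Using $(b^*\delta_{s^*})(a_u\delta_u)(b\delta_s)=(b^*\!\cdot a_u\cdot b)\,\delta_{s^*us}$ in $\csp{\cA}$ and then invoking \eqref{eq:conditionalformula} together with the definition of $\fE$, one finds
\[
\fEKS\bigl(\lambda_s(b)^*\lambda_u(a_u)\lambda_s(b)\bigr)=
\begin{cases}
\Phi^\cE_\Lambda\bigl((b^*\!\cdot a_u\cdot b)\,\delta_{s^*us}\bigr) & \text{if } s^*us\in E,\\
0 & \text{otherwise,}
\end{cases}
\]
whereas
\[
\lambda_s(b)^*\fEKS(\lambda_u(a_u))\lambda_s(b)=
\begin{cases}
\Phi^\cA_\Lambda\bigl((b^*\!\cdot a_u\cdot b)\,\delta_{s^*us}\bigr) & \text{if } u\in E,\\
0 & \text{otherwise.}
\end{cases}
\]

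If $u\in E$, a short calculation yields $s^*us\in E$ as well, and under the diagonal embedding $\CrKS{\cE}\hookrightarrow\CrKS{\cA}$ the two expressions agree. If both $u\notin E$ and $s^*us\notin E$, both sides vanish. In the remaining case, $u\notin E$ with $s^*us\in E$, the hypothesis $A_0=\{0\}$ handles $s^*us=0$ (both sides become zero), so the entire proof reduces to ruling out $u\notin E$ with $v:=s^*us\in E^\times$ under $E^*$-unitarity.

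For this, I would set $f:=svs^*$. From $v^2=v$, $v^*=v$, and $v\leq s^*s$ one readily checks that $f\in E$ and that $f\neq 0$ (since $s^*fs=v$). The key computation
\[
(fu)\cdot ss^*=svs^*u\cdot ss^*=sv(s^*us)s^*=sv\cdot v\cdot s^*=svs^*=f
\]
shows $f\leq fu$ in $S$, while $fu=f\cdot u$ with $f\in E$ gives $fu\leq u$; transitivity of $\leq$ then yields $f\leq u$. Since $f\in E^\times$, the $E^*$-unitarity of $S$ forces $u\in E$, contradicting our assumption. The main obstacle is this inverse-semigroup identity $f\leq fu\leq u$ and the iterated appeal to $E^*$-unitarity; everything else is routine bookkeeping built on \eqref{eq:conditionalformula}.
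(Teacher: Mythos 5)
Your proof is correct and follows essentially the same route as the paper: reduce by linearity, continuity of $\fEKS$, and density of $\Phi^\cA_\Lambda(\csp{\cA})$ to elements of $\csp{\cA}$, apply \eqref{eq:conditionalformula}, and observe that $E^*$-unitarity rules out $u\notin E$ with $s^*us\in E^\times$. Your element $f=svs^*$ is exactly the paper's $ss^*uss^*$, and your chain $f\leq fu\leq u$ is just an explicit verification of the inequality $ss^*uss^*\leq u$ that the paper invokes without comment.
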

\begin{proof}
Let $s\in S$ and $b\in A_s$. 
Consider $\sum_{t\in S}a_t\delta_t\in \csp{\cA}$. Then for any $t\in S$ we have that $s^*ts=0$ if and only if $ss^*tss^*=0$. Moreover, $s^*ts\in E$ if and only if $ss^*tss^*\in E$; thus,
since $S$ is $E^*$-unitary and $t\geq ss^*tss^*$, we get that  $s^*ts\in E$ if and only if  $t\in E$ or  
 $s^*ts=0$.
Hence, using that $A_0=\{0\}$ (if $S$ has a $0$ element), we get
\begin{align*}
\fE\left((b\delta_s)^*\Big(\sum_{t\in S}a_t\delta_t\Big)(b\delta_s)\right)
&=\fE\left(\sum_
{t\in S,\, s^*ts\neq 0}(b^*\cdot a_t\cdot b)\delta_{s^*ts}\right)\\
&=\sum_
{t\in S, \, s^*ts\in E^\times}
(b^*\cdot a_t\cdot b)\delta_{s^*ts}\\
&=(b\delta_s)^*\fE\left(\sum_{t\in S}a_t\delta_t\right)(b\delta_s).
\end{align*}
Using equation (\ref{eq:conditionalformula}), we then see that equation (\ref{eq:covar}) holds whenever $T= \Phi_\Lambda^\cA(g)$ for some $g \in \csp{\cA}$.
By linearity and continuity of $\fEKS$ and density of $\Phi_\Lambda^\cA(\csp{\cA})$ in $\CrKS{\cA}$,
it then holds for all $T\in\CrKS{\cA}$.
\end{proof}

\section{Comparison with Exel's reduced cross sectional $C^*$-algebras}\label{sec:comparison}
{\it Throughout this section we consider a Fell bundle  $\cA=\{A_s\}_{s\in S}$ over an $E^*$-unitary inverse semigroup $S$ and assume that $A_0=\{0\}$ $($if $S$ has a $0$ element$)$.} Our aim is to show that Exel's $C^*_r(\cA)$ is a 
quotient of $\CrAlt{\cA}$ and that these $C^*$-algebras are canonically isomorphic under certain assumptions.
 
\smallskip As in the previous section, we let $\cE=\{A_e\}_{e\in E}$ denote the Fell bundle  obtained by restricting $\cA$ to the semilattice $E=E(S)$. From Proposition \ref{condi-exp}, we see that $\fEKS\big(\Phi^\cA_\Lambda(\cN_\cA)\big)=\Phi^\cE_\Lambda(\cN_\cE)$, and it easily follows that $\fEKS(\mathcal{I}_\cA)=\mathcal{I}_\cE$. 

\smallskip For any ideal $\mathcal{K}$ of $\CrKS{\cA}$ satisfying $\fEKS(\mathcal{K})=\mathcal{I}_\cE$ we can define a surjective linear map $\fE_\mathcal{K}:\CrKS{\cA}/\mathcal{K}\to\CrAlt{\cE}$ by \[\fE_\mathcal{K}(T+\mathcal{K})=\fEKS(T)+\mathcal{I}_\cE.\] It is straightforward to check that $\fE_\mathcal{K}$ is contractive. Note also that for each $T\in\Phi^\cA_\Lambda(\csp{\cE})$ we have
\[
\|T+\mathcal{I}_\cE\|=\|\fE_\mathcal{K}(T+\mathcal{K})\|\leq\|T+\mathcal{K}\|\leq\|T+\mathcal{I}_\cE\|
\]
where the last inequality uses that the map \[g\mapsto\Phi^\cA_\Lambda(g) +\, \mathcal{K}\] is a representation of $\csp{\cE}$ in $\CrKS{\cA}/\mathcal{K}$ and that $\CrAlt{\cE}=C^*(\cE)$. It follows that $\|T + \mathcal{I}_\cE\| = \|T+\mathcal{K}\|$ for each $T \in \CrKS{\cE}$, so
we can identify $\CrAlt{\cE}$ with
the image of $\CrKS{\cE}$ in the quotient $\CrKS{\cA}/\mathcal{K}$. Using Tomiyama's theorem (see for instance \cite[Theorem~II.6.10.2]{blackadar}), we get that $\fE_\mathcal{K}$ is a conditional expectation; in particular it is completely positive. 

\begin{proposition}
Define
\[
 \mathcal{J}_\cA=\{T\in\CrKS{\cA}:\fEKS(T^*T)\in \mathcal{I}_\cE\}.
\]
Then we have
\begin{equation}\label{tempideal}
\mathcal{J}_\cA=\{T\in\CrKS{\cA}\,:\, \fEKS(QTR)\in\mathcal{I}_\cE \, \text{ for all } Q,R\in \CrKS{\cA}\}\,.
\end{equation}
Thus $\mathcal{J}_\cA$ is an ideal of $\CrKS{\cA}$, satisfying $\mathcal{I}_\cA\subset\mathcal{J}_\cA$ and $\fEKS(\mathcal{J}_\cA)=\mathcal{I}_\cE$. 
Moreover, the conditional expectation $\fE_{\mathcal{J}_\cA}$ from
$\CrKS{\cA}/\mathcal{J}_\cA$ onto $\CrAlt{\cE}$ is faithful.
\end{proposition}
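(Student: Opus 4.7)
Plan: I would prove the four assertions in the order they appear, exploiting that $\fEKS$ is a faithful conditional expectation onto $\CrKS{\cE}$ (Proposition~\ref{condi-exp}) and that the identity $\fEKS(\mathcal{I}_\cA)=\mathcal{I}_\cE$ has already been recorded at the start of this section.

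As a preliminary, I would first verify that $\mathcal{J}_\cA$ is a closed \emph{left} ideal of $\CrKS{\cA}$. Closedness is immediate from continuity of $\fEKS$ and closedness of $\mathcal{I}_\cE$. Additivity follows from $(S+T)^*(S+T)\le 2(S^*S+T^*T)$ together with the fact that $\mathcal{I}_\cE$, being a closed ideal of a $C^*$-algebra, is hereditary. The left-ideal property uses $(QT)^*(QT)\le \|Q\|^2\, T^*T$, giving $\fEKS((QT)^*(QT))\le \|Q\|^2\,\fEKS(T^*T)\in\mathcal{I}_\cE$, followed by heredity.

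Next comes the crucial equality \eqref{tempideal}. Denoting its right-hand side by $\mathcal{J}'$, the inclusion $\mathcal{J}'\subset\mathcal{J}_\cA$ is obtained by taking $Q=T^*$ and $R=u_i$ for an approximate unit $(u_i)$ of $\CrKS{\cA}$ and passing to the limit $\fEKS(T^*Tu_i)\to \fEKS(T^*T)\in\mathcal{I}_\cE$. For the reverse inclusion, given $T\in\mathcal{J}_\cA$, I would set $S=(T^*T)^{1/2}$; then $\fEKS(S^*S)=\fEKS(T^*T)\in\mathcal{I}_\cE$, so $S\in\mathcal{J}_\cA$, and the preliminary step gives $SR\in\mathcal{J}_\cA$ for every $R\in\CrKS{\cA}$. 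Hence $\fEKS(R^*T^*TR)=\fEKS\bigl((SR)^*(SR)\bigr)\in\mathcal{I}_\cE$. The Kadison--Schwarz inequality applied to the (completely positive) conditional expectation $\fEKS$ then yields
\[
\fEKS(QTR)^*\fEKS(QTR)\le \fEKS(R^*T^*Q^*QTR)\le \|Q\|^2\,\fEKS(R^*T^*TR)\in\mathcal{I}_\cE,
\]
so heredity gives $\fEKS(QTR)^*\fEKS(QTR)\in\mathcal{I}_\cE$, and therefore $\fEKS(QTR)\in\mathcal{I}_\cE$. The main obstacle of the proof lies here: promoting the easy left-ideal fact to the desired two-sided statement via the trick of passing through $S=(T^*T)^{1/2}$ and Kadison--Schwarz.

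From the symmetric description \eqref{tempideal}, combined with the identities $\fEKS(X^*)=\fEKS(X)^*$ and $\mathcal{I}_\cE^*=\mathcal{I}_\cE$ (replace $(Q,R)$ by $(R^*,Q^*)$), it follows at once that $T\in\mathcal{J}_\cA$ implies $T^*\in\mathcal{J}_\cA$; thus $\mathcal{J}_\cA$ is a two-sided ideal. For $\mathcal{I}_\cA\subset \mathcal{J}_\cA$, given $T\in \mathcal{I}_\cA$ and arbitrary $Q,R\in \CrKS{\cA}$, one has $QTR\in\mathcal{I}_\cA$, hence $\fEKS(QTR)\in\fEKS(\mathcal{I}_\cA)=\mathcal{I}_\cE$, so $T\in\mathcal{J}'=\mathcal{J}_\cA$. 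Applying $\fEKS$ to this inclusion gives $\mathcal{I}_\cE=\fEKS(\mathcal{I}_\cA)\subset\fEKS(\mathcal{J}_\cA)$, while the reverse inclusion $\fEKS(\mathcal{J}_\cA)\subset\mathcal{I}_\cE$ follows from \eqref{tempideal} by taking $Q=R=u_i$ and passing to the limit. Finally, faithfulness of $\fE_{\mathcal{J}_\cA}$ is tautological: the equality $\fE_{\mathcal{J}_\cA}\bigl((T+\mathcal{J}_\cA)^*(T+\mathcal{J}_\cA)\bigr)=0$ amounts to $\fEKS(T^*T)\in\mathcal{I}_\cE$, which is the very definition of $T\in\mathcal{J}_\cA$.
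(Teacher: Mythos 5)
Your preliminary step and the easy inclusions are fine: $\mathcal{J}_\cA$ is indeed a closed left ideal by the order argument (heredity of the closed ideal $\mathcal{I}_\cE$ plus $(QT)^*(QT)\leq\|Q\|^2T^*T$ and $(S+T)^*(S+T)\leq 2(S^*S+T^*T)$), the inclusion of the right-hand side of \eqref{tempideal} into $\mathcal{J}_\cA$ via an approximate unit is correct, and so are the derivations of $\mathcal{I}_\cA\subset\mathcal{J}_\cA$, of $\fEKS(\mathcal{J}_\cA)=\mathcal{I}_\cE$, and the tautological faithfulness of $\fE_{\mathcal{J}_\cA}$.

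However, the step you yourself flag as the crux is broken. Having shown that $\mathcal{J}_\cA$ is a \emph{left} ideal and that $S=(T^*T)^{1/2}\in\mathcal{J}_\cA$, you invoke "the preliminary step" to conclude $SR\in\mathcal{J}_\cA$ for all $R\in\CrKS{\cA}$. But the left-ideal property gives $RS\in\mathcal{J}_\cA$, i.e.\ $\fEKS(SR^*RS)\in\mathcal{I}_\cE$, whereas what you need is $\fEKS(R^*S^2R)=\fEKS(R^*T^*TR)\in\mathcal{I}_\cE$; passing through the positive square root does not convert one into the other. Concluding $SR\in\mathcal{J}_\cA$ from $(SR)^*=R^*S\in\mathcal{J}_\cA$ would require $\mathcal{J}_\cA$ to be self-adjoint, which is exactly the two-sidedness you are trying to establish, so the argument is circular. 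The point is that a conditional expectation is only a bimodule map over its range, so there is no general reason why $\fEKS(R^*XR)$ should lie in $\mathcal{I}_\cE$ when $\fEKS(X)$ does; some additional input tied to the structure of $\CrKS{\cA}$ is indispensable. The paper supplies it through the covariance identity $\fEKS\big(\lambda_s(b)^*T\lambda_s(b)\big)=\lambda_s(b)^*\fEKS(T)\lambda_s(b)$ (Lemma~\ref{covar}, which is where the $E^*$-unitarity of $S$ and the hypothesis $A_0=\{0\}$ enter): this handles right multiplication by the generators $\lambda_s(b)$, after which Kadison--Schwarz, heredity, and density of $\Phi^\cA_\Lambda(\csp{\cA})$ give $\fEKS(TR)\in\mathcal{I}_\cE$ for all $R$. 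Your proof never uses Lemma~\ref{covar} or any substitute for it, so the gap is essential rather than cosmetic.
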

\begin{proof}
Let $\mathcal{K}$ be the ideal of $\CrKS{\cA}$ defined by the right hand side of equation \eqref{tempideal}. Then by using an approximate unit for $\CrKS{\cA}$ one easily deduce that $\fEKS(\mathcal{K})=\mathcal{I}_\cE$ and $\mathcal{K}\subset\mathcal{J}_\cA$.

Let $T\in\mathcal{J}_\cA$. 
Then we have
\begin{equation}\label{norm-zero}
\|\fE_\mathcal{K}(T^*T+\mathcal{K})\| = \|\fEKS(T^*T)+\mathcal{I}_\cE\| = 0
\end{equation}
since $\fEKS(T^*T)\in \mathcal{I}_\cE$. 
Consider now  $Q\in\CrKS{\cA}$. Then, by using 
the Cauchy-Schwarz inequality (cf.~\cite{lance95}) and equation (\ref{norm-zero}), we get
\begin{align*}
\|\fEKS\big((QT)^*(QT)\big)+\mathcal{I}_\cE\|^2&=
\|\fE_\mathcal{K}\big((QT)^*(QT)+\mathcal{K}\big)\|^2\\
&=\|\fE_\mathcal{K}\big((T+\mathcal{K})^*(Q^*QT+\mathcal{K})\big)\|^2 \\
&\leq \|\fE_\mathcal{K}(T^*T+\mathcal{K})\|\|\fE_\mathcal{K}((Q^*QT)^*(Q^*QT)+\mathcal{K})\|\\
&=0
\end{align*}
So $\fEKS((QT)^*(QT))\in \mathcal{I}_\cE$, hence $QT\in\mathcal{J}_\cA$.

Next, consider $R=\lambda_s(b)$ for $s\in S$ and $b\in A_s$. Lemma~\ref{covar} gives that $\fEKS(R^*T^*TR)=R^*\fEKS(T^*T)R$. So $\fEKS(R^*T^*TR)\in\mathcal{I}_\cA$ since $\fEKS(T^*T)\in \mathcal{I}_\cE\subset\mathcal{I}_\cA$ and $\mathcal{I}_\cA$ is an ideal. As the range of $\fEKS$ is $\CrKS{\cE}$ we also get that $\fEKS(R^*T^*TR)\in\CrKS{\cE}$, so $\fEKS(R^*T^*TR)\in\mathcal{I}_\cE$. By the Schwarz inequality (sometimes called the Kadison inequality), see for instance \cite[Proposition II.6.9.14]{blackadar}, we have
\[
\fEKS(TR)^*\fEKS(TR)\leq\fEKS((TR)^*TR)= \fEKS(R^*T^*TR)\in\mathcal{I}_\cE.
\]
Hence $\fEKS(TR)^*\fEKS(TR) \in \mathcal{I}_\cE$ since $\mathcal{I}_\cE$ (being an ideal) is a hereditary subalgebra of $\CrKS{\cE}$, and it therefore follows that
 $\fEKS(TR)\in\mathcal{I}_\cE$ (cf.~\cite[Proposition II.5.1.1]{blackadar}). 
By linearity and continuity of $\fEKS$ and density of $\Phi_\Lambda^\cA(\csp{\cA})$ in $\CrKS{\cA}$, we get that $\fEKS(TR)\in\mathcal{I}_\cE$ for all $R\in\CrKS{\cA}$.

If now $T\in\mathcal{J}_\cA$ and $Q, R\in\CrKS{\cA}$, then we get that $T':=QT\in\mathcal{J}_\cA$, and this implies that $\fEKS(QTR)=\fEKS(T'R)\in\mathcal{I}_\cE$. This shows that  $\mathcal{J}_\cA\subset\mathcal{K}$, hence that  $\mathcal{J}_\cA=\mathcal{K}$.

Since we now have shown that $ \mathcal{J}_\cA$  is an ideal of $\CrKS{\cA}$ satisfying $\fEKS(\mathcal{J}_\cA)= \mathcal{I}_\cE$, the canonical conditional expectation $\fE_{\mathcal{J}_\cA}$ from
$\CrKS{\cA}/\mathcal{J}_\cA$ onto $\CrAlt{\cE}$ is well defined. Showing that $\fE_{\mathcal{J}_\cA}$ is faithful amounts to verify that  $T^*T\in\mathcal{J}_\cA$ whenever $\fEKS(T^*T)\in\mathcal{I}_\cE$. This readily follows from the definition of $\mathcal{J}_\cA$ and the fact that $\mathcal{J}_\cA$ is an ideal of $\CrKS{\cA}$.
\end{proof}

\begin{proposition}\label{Psi-fEalt}
 Let $q_{\mathcal{J}_\cA} $ denote the quotient map from $\CrKS{\cA}$ onto $\CrKS{\cA}/\mathcal{J}_\cA$.
Then there exists a canonical isomorphism \[\Psi:\CrKS{\cA}/\mathcal{J}_\cA\to C^*_r(\cA)\] satisfying $\Psi\circ q_{\mathcal{J}_\cA}\circ \Phi_\Lambda^\cA = \iota^{\rm red}_\cA$.\end{proposition}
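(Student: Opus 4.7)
The plan is to build a $*$-homomorphism $\tilde\Psi\colon \CrKS{\cA}\to C^*_r(\cA)$ satisfying $\tilde\Psi\circ \Phi_\Lambda^{\cA}=\iota^{\rm red}_\cA$, and then to identify its kernel with $\mathcal{J}_\cA$; factoring $\tilde\Psi$ through $q_{\mathcal{J}_\cA}$ will then deliver the desired $\Psi$. Since $C^*_r(\cA)$ is defined in \ref{sec:exelreduced} via the GNS representations $\Upsilon_{\tilde\varphi}$ indexed by $\varphi\in\mathcal{P}(C^*(\cE))$, the existence of $\tilde\Psi$ reduces to showing that $\|\Upsilon_{\tilde\varphi}(g)\|\le \|\Phi_\Lambda^{\cA}(g)\|$ for every $g\in\csp{\cA}$. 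The key idea is to realize $\Upsilon_{\tilde\varphi}$ as (unitarily equivalent to) the pull-back along $\Phi_\Lambda^{\cA}$ of a GNS representation of a positive functional on $\CrKS{\cA}$ built from $\varphi$ by means of the conditional expectation $\fEKS$.

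Concretely, I would use the isomorphism $C^*(\cE)\simeq\CrKS{\cE}$ granted by Proposition \ref{prop:universalKS} and its corollary to transport each $\varphi\in\mathcal{P}(C^*(\cE))$ to a pure state $\bar\varphi$ on $\CrKS{\cE}$, and set $\hat\varphi:=\bar\varphi\circ \fEKS$ on $\CrKS{\cA}$. Combining (\ref{eq:stateextension}) with the formula (\ref{eq:conditionalformula}) yields $\hat\varphi(\Phi_\Lambda^{\cA}(g))=\tilde\varphi(g)$ for every $g\in\csp{\cA}$. Letting $(\pi_{\hat\varphi},H_{\hat\varphi})$ be the GNS representation of $\hat\varphi$, this identity shows that the map sending the GNS class of $g$ in $H_{\tilde\varphi}$ to the GNS class of $\Phi_\Lambda^{\cA}(g)$ in $H_{\hat\varphi}$ is a well-defined linear isometry between dense subspaces; since $\Phi_\Lambda^{\cA}(\csp{\cA})$ is dense in $\CrKS{\cA}$ and $\hat\varphi$ is continuous, this extends to a unitary $V\colon H_{\tilde\varphi}\to H_{\hat\varphi}$ intertwining $\Upsilon_{\tilde\varphi}$ with $\pi_{\hat\varphi}\circ\Phi_\Lambda^{\cA}$. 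Taking the supremum over $\varphi$ yields $\|g\|'_{\rm r}\le \|\Phi_\Lambda^{\cA}(g)\|_{\rm r}$, and by continuity $\iota^{\rm red}_\cA$ extends to a surjective $*$-homomorphism $\tilde\Psi\colon \CrKS{\cA}\to C^*_r(\cA)$ with $\tilde\Psi\circ\Phi_\Lambda^{\cA}=\iota^{\rm red}_\cA$.

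The main obstacle is the identification $\ker\tilde\Psi=\mathcal{J}_\cA$. Since the family $\{\tilde\Upsilon_{\tilde\varphi}\}_\varphi$ is jointly faithful on $C^*_r(\cA)$ by construction, $T\in\ker\tilde\Psi$ iff $\pi_{\hat\varphi}(T)=0$ for every $\varphi$, which by the GNS inner product amounts to $\bar\varphi(\fEKS(S^*T^*TS))=0$ for every $\varphi\in\mathcal{P}(C^*(\cE))$ and every $S\in\CrKS{\cA}$. Because pure states of $C^*(\cE)\simeq\CrKS{\cE}/\mathcal{I}_\cE$ separate positive elements of the quotient, this last condition is equivalent to $\fEKS(S^*T^*TS)\in\mathcal{I}_\cE$ for every $S\in\CrKS{\cA}$. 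The characterization (\ref{tempideal}) immediately gives $\mathcal{J}_\cA\subseteq\ker\tilde\Psi$ (apply it with $Q=S^*T^*$ and $R=S$), while the reverse inclusion follows by taking $S$ from an approximate unit of $\CrKS{\cA}$, so that $\fEKS(S^*T^*TS)\to\fEKS(T^*T)$ in norm and the closedness of $\mathcal{I}_\cE$ forces $\fEKS(T^*T)\in\mathcal{I}_\cE$, i.e.\ $T\in\mathcal{J}_\cA$. With this identification in hand, $\tilde\Psi$ factors through $q_{\mathcal{J}_\cA}$ to yield the sought isomorphism $\Psi$.
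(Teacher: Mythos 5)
Your argument is correct, and its first half (the existence of $\tilde\Psi$, equivalently the inequality $\|\Upsilon_{\tilde\varphi}(g)\|\leq\|\Phi_\Lambda^\cA(g)\|$) is essentially the paper's: the state you call $\hat\varphi=\bar\varphi\circ\fEKS$ is exactly the paper's $\varphi'=\varphi\circ\fE_{\mathcal{J}_\cA}\circ q_{\mathcal{J}_\cA}$, and the GNS comparison via the isometry $V$ is the same computation (you additionally observe, correctly, that $V$ is onto, which you then genuinely need in order to pass from $\ker\tilde\Psi$ to $\bigcap_\varphi\ker\pi_{\hat\varphi}$). One slip to fix: Proposition \ref{prop:universalKS} gives $\CrKS{\cE}=\CKS{\cE}$, not $C^*(\cE)\simeq\CrKS{\cE}$; the algebra $C^*(\cE)$ is the quotient $\CrKS{\cE}/\mathcal{I}_\cE=\CrAlt{\cE}$, so $\bar\varphi$ must be taken as the pull-back of $\varphi$ along the quotient map (a pure state on $\CrKS{\cE}$ vanishing on $\mathcal{I}_\cE$) -- you use the correct description later, so this is only a matter of wording. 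Where you genuinely diverge from the paper is in proving injectivity. The paper builds a conditional expectation $\fEr$ on $C^*_r(\cA)$ satisfying $\fEr\circ\iota^{\rm red}_\cA=\iota_\cE\circ\fE$ and then reads off injectivity of $\Psi$ from the commuting square $\fEr\circ\Psi=\fE_{\mathcal{J}_\cA}$ together with the faithfulness of $\fE_{\mathcal{J}_\cA}$ established in the preceding proposition. You instead compute $\ker\tilde\Psi$ directly: joint faithfulness of the $\tilde\Upsilon_{\tilde\varphi}$, the GNS inner product, separation of positive elements by pure states of $C^*(\cE)$, the characterization \eqref{tempideal} for one inclusion, and an approximate-unit limit for the other. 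This is a clean and somewhat more direct route; both proofs lean on the preceding proposition (yours on \eqref{tempideal}, the paper's on faithfulness of $\fE_{\mathcal{J}_\cA}$). The only thing your route does not deliver is the map $\fEr$ itself, which the paper extracts from this proof to obtain Proposition \ref{faith-cond}; with your approach that would require a separate (short) argument.
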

\begin{proof}
The strategy for proving the proposition is to show that there exists a $*$-homomorphism $\Psi:\CrKS{\cA}/\mathcal{J}_\cA\to C^*_r(\cA)$ and a linear map $\fEr:C^*_r(\cA)\to C^*(\cE)$ making the following diagram commute:
\begin{center}
\begin{tikzpicture}[>=angle 90, scale=2.2, text height=1.5ex, text depth=0.25ex]
\node (CrA)    at (0,3) {$C^*_r(\cA)$};
\node (CrJA) at (3,3) {$\CrKS{\cA}/\mathcal{J}_\cA$};
\node (CE)     at (0,0) {$C^*(\cE)$};
\node (CraltE) at (3,0) {$\CrAlt{\cE}$};
\node (CcA)    at (1,2) {$\csp{\cA}$};
\node (CrKSA)  at (2,2) {$\CrKS{\cA}$};
\node (CcE)    at (1,1) {$\csp{\cE}$};
\node (CrKSE)  at (2,1) {$\CrKS{\cE}$};
\path[right hook->,font=\scriptsize]
(CcA) edge node[above] {$\Phi^\cA_\Lambda$} (CrKSA)
(CcE) edge node[above] {$\Phi^\cE_\Lambda$} (CrKSE);
\path[->>,font=\scriptsize]
(CrKSA) edge node[above, sloped] {$q_{\mathcal{J}_\cA}$} (CrJA)
(CrKSE) edge (CraltE);
\path[->,font=\scriptsize]
(CcA) edge node[auto] {$\fE$} (CcE)
(CrKSA) edge node[auto] {$\fEKS$} (CrKSE)
(CrJA) edge node[auto] {$\fE_{\mathcal{J}_\cA}$} (CraltE)
(CraltE) edge node[above] {$=$} (CE)
(CcE) edge node[above, sloped] {$\iota_\cE$} (CE)
(CcA) edge node[above, sloped] {$\iota^{\rm red}_\cA$}(CrA);
\path[dashed,->,font=\scriptsize]
(CrA) edge node[auto] {$\fEr$} (CE)
(CrJA) edge node[above] {$\Psi$} (CrA);
\end{tikzpicture}
\end{center}
It will then follow that $\Psi$ is an isomorphism by considering the outer square in this diagram and using that $\fE_{\mathcal{J}_\cA}$ is faithful, as shown in the previous proposition. 

Let $\varphi$ be a pure state on $C^*(\cE)$. It is easy to deduce from equation (\ref{eq:stateextension}) that the functional  $\tilde{\varphi}$ on $\csp{\cA}$ defined in section \ref{sec:exelreduced} is given by \[\tilde{\varphi}=\varphi\circ \iota_{\cE}\circ\fE\]
 where $\iota_\cE$ denotes the canonical map from $\mathcal{C}_c(\cE)$ to $C^*(\cE)$. Moreover, it is straightforward to see that we have $\iota_{\cE}\circ\fE = \fE_{\mathcal{J}_\cA}\circ q_{\mathcal{J}_\cA}\circ\Phi_\Lambda^\cA$,
 so we get
\[
\tilde{\varphi}=\varphi\circ\fE_{\mathcal{J}_\cA}\circ q_{\mathcal{J}_\cA}\circ\Phi_\Lambda^\cA\,.
\]
Let $\varphi'=\varphi\circ\fE_{\mathcal{J}_\cA}\circ q_{\mathcal{J}_\cA}$. Then $\varphi'$ is a state on $\CrKS{\cA}$. As before, let $(\Upsilon_{\tilde{\varphi}}, H_{\tilde{\varphi}})$ be the GNS representation associated to $\tilde{\varphi}$, with $x\mapsto\hat{x}$ denoting the canonical map $\csp{\cA}\to H_{\tilde{\varphi}}$. Form also the GNS-representation $(\pi_{\varphi'}, H_{\varphi'})$ associated to $\varphi'$, with $T\mapsto \widehat{T}$ denoting the canonical map from $\CrKS{\cA}$ into $H_{\varphi'}$. For any $x\in\csp{\cA}$, we obtain
\[
 \|\hat{x}\|^2=\tilde{\varphi}(x^*x)=\varphi'\big(\Phi_\Lambda^\cA(x^*x)\big)=\|\widehat{\Phi_\Lambda^\cA(x)}\|^2.
\]
Since $\{\hat{x}:x\in\csp{\cA}\}$ is dense in $H_{\tilde{\varphi}}$, the assignment $\hat{x}\mapsto\widehat{\Phi_\Lambda^\cA(x)}$ extends to an isometry $V:H_{\tilde{\varphi}}\to H_{\varphi'}$. 

Consider now $g\in\csp{\cA}$. For any $x,y\in\csp{\cA}$ we get
\begin{align*}
 \Big\langle V^*\pi_{\varphi'}\big(\Phi_\Lambda^\cA(g)\big) V\hat{x},\hat{y}\Big\rangle&=\Big\langle \pi_{\varphi'}\big(\Phi_\Lambda^\cA(g)\big)\widehat{\Phi_\Lambda^\cA(x)},\widehat{\Phi_\Lambda^\cA(y)} \Big\rangle\\
 &=\varphi'\big(\Phi_\Lambda^\cA(y^*gx)\big)
 =\tilde{\varphi}(y^*gx)\\
 &=\Big\langle \Upsilon_{\tilde{\varphi}}(g)\hat{x},\hat{y}\Big\rangle.
\end{align*}
So $\Upsilon_{\tilde{\varphi}}(g)=V^*\pi_{\varphi'}\big(\Phi_\Lambda^\cA(g)\big)V$, and it follows that $\|\Upsilon_{\tilde{\varphi}}(g)\|\leq \|\pi_{\varphi'}\big(\Phi_\Lambda^\cA(g)\big)\|$ since $V$ is an isometry.  
Moreover, as $\varphi'$  annihilates $\mathcal{J}_\cA$, 
the kernel of $\pi_{\varphi'}$ contains $\mathcal{J}_\cA$, so we get $\|\pi_{\varphi'}\big(\Phi_\Lambda^\cA(g)\big)\|\leq\|q_{\mathcal{J}_\cA}(\Phi_\Lambda^\cA(g))\|$. Hence we conclude that  
\[\|\Upsilon_{\tilde{\varphi}}(g)\|\leq \|q_{\mathcal{J}_\cA}(\Phi_\Lambda^\cA(g))\|.\]
Since this holds for all $\varphi \in \mathcal{P}(C^*(\cE))$ we get $\|\iota^{\rm red}_\cA(g)\|\leq \|q_{\mathcal{J}_\cA}(\Phi_\Lambda^\cA(g))\|$. It follows that there exists a $*$-homomorphism $\Psi:\CrKS{\cA}/\mathcal{J}_\cA\to C^*_r(\cA)$ satisfying $\Psi\big(q_{\mathcal{J}_\cA}(\Phi_\Lambda^\cA(g))\big) = \iota^{\rm red}_\cA(g)$ for all $g \in\csp{\cA}$, as desired.
 
Next, we will show that the map $\fEr':\iota_\cA^{\rm red}(\csp{\cA})\to C^*(\cE)$ given by $\fEr'(\iota_\cA^{\rm red}(g))=\iota_\cE(\fE(g))$ is well defined, linear and contractive. By density, it will then extend to a (contractive) linear map $\fEr:C^*_r(\cA)\to C^*(\cE)$, as desired.

To see that $\fEr'$ is well defined, note that if $g\in\csp{\cA}$ and $\iota_\cA^{\rm red}(g)=0$, then $0=\tilde{\varphi}(x^*gy)=\varphi(\iota_\cE(\fE(x^*gy)))$ for all $x,y\in\csp{\cA}$ and all $\varphi \in \mathcal{P}(C^*(\cE))$. Letting $x$ and $y$ range over $\csp{\cE}$ so that $\fE(x^*gy)=x^*\fE(g)y$ (which follows from Proposition \ref{condi-exp} since $\fEKS$ is a conditional expectation), and using the density of $\iota_\cE(\csp{\cE})$ in $C^*(\cE)$ we get $\iota_\cE(\fE(g))=0$. It readily follows that $\fEr'$ is well defined, and its linearity is then obvious.   

Further, consider $g\in\csp{\cA}$. For $x,y\in\csp{\cE}$ with $\|\iota_\cE(x)\|,\|\iota_\cE(y)\|\leq 1$, we have
 \begin{align*}
  \|\iota_\cE(x^*\fE(g)y)\|&=\|\iota_\cE(\fE(x^*gy))\|=\sup_{\varphi\in\mathcal{P}(C^*(\cE))}\varphi(\iota_\cE(\fE(x^*gy)))\\
	&\leq \sup_{\varphi\in\mathcal{P}(C^*(\cE))}\|\Upsilon_{\tilde{\varphi}}(g)\|=\|\iota^{\rm red}_\cA(g)\|\,.
 \end{align*}
For every $\varepsilon>0$ it is not difficult to see that we can find  $x$ and $y$ as above such that \[\|\iota_\cE(\fE(g))\|\leq\|\iota_\cE(x^*\fE(g)y)\|+\varepsilon,\] so  we get 
\[\|\fEr'(\iota_\cA^{\rm red}(g))\| =\|\iota_\cE(\fE(g))\|\, \leq\,\|\iota_\cE(x^*\fE(g)y)\|+\varepsilon \,\leq\, \|\iota^{\rm red}_\cA(g)\| + \varepsilon\,.\]
Thus we conclude that $\fEr'$ is contractive.

The reader will have no problem to check that the maps $\Psi$ and $\fEr$ we have constructed make the above diagram commutative, thus finishing the proof.
\end{proof}

From Proposition \ref{Psi-fEalt} and its proof we get the following result which may be worthy of being stated separately. It is proved for saturated Fell bundles over unital inverse semigroups in \cite{buss_exel_meyer15} using a different approach.

\begin{proposition}\label{faith-cond}
There exists a faithful conditional expectation \[\fEr:C^*_r(\cA)\to C^*(\cE)\]
satisfying $\fEr\circ\iota_\cA^{\rm red} = \iota_\cA\circ\fE$.
\end{proposition}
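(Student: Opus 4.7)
The plan is to extract the conclusion almost directly from the previous proposition and the commutative diagram constructed in its proof. Recall that the proof of Proposition \ref{Psi-fEalt} produced a $*$-homomorphism $\Psi:\CrKS{\cA}/\mathcal{J}_\cA\to C^*_r(\cA)$ together with a contractive linear map $\fEr:C^*_r(\cA)\to C^*(\cE)$ satisfying $\fEr\circ\iota_\cA^{\rm red}=\iota_\cE\circ\fE$, and that $\Psi$ was shown to be an isomorphism by chasing the outer square using the faithfulness of $\fE_{\mathcal{J}_\cA}$. The intertwining property required in the present statement is therefore already in hand, and only the conditional expectation and faithfulness assertions remain.

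For these, I would observe that the commutative diagram in the proof of Proposition \ref{Psi-fEalt} gives, on the generators $\iota_\cA^{\rm red}(g)$ with $g\in\csp{\cA}$, the identity
\[
\fEr\circ\Psi\circ q_{\mathcal{J}_\cA}\circ\Phi_\Lambda^\cA \,=\, \iota_\cE\circ\fE \,=\, \fE_{\mathcal{J}_\cA}\circ q_{\mathcal{J}_\cA}\circ\Phi_\Lambda^\cA\,.
\]
By density of $\Phi_\Lambda^\cA(\csp{\cA})$ in $\CrKS{\cA}$, surjectivity of $q_{\mathcal{J}_\cA}$, and continuity of both sides, this yields $\fEr\circ\Psi=\fE_{\mathcal{J}_\cA}$ on all of $\CrKS{\cA}/\mathcal{J}_\cA$. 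Equivalently, $\fEr=\fE_{\mathcal{J}_\cA}\circ\Psi^{-1}$.

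Since $\Psi^{-1}:C^*_r(\cA)\to \CrKS{\cA}/\mathcal{J}_\cA$ is a $*$-isomorphism and $\fE_{\mathcal{J}_\cA}:\CrKS{\cA}/\mathcal{J}_\cA\to \CrAlt{\cE}=C^*(\cE)$ was shown in the previous proposition to be a faithful conditional expectation onto $\CrAlt{\cE}$, it follows immediately that $\fEr$ is a faithful conditional expectation onto $\Psi(\fE_{\mathcal{J}_\cA}^{-1}(\CrAlt{\cE}))$, which under the identification $\CrAlt{\cE}=C^*(\cE)$ is the image of $\iota_\cE(\csp{\cE})$ inside $C^*_r(\cA)$, i.e.\ the canonical copy of $C^*(\cE)$. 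I do not expect any serious obstacle here: the real work was already carried out in Proposition \ref{Psi-fEalt}, and this proposition is essentially a repackaging under the isomorphism $\Psi$. The only point to be slightly careful about is to verify the identity $\fEr\circ\Psi=\fE_{\mathcal{J}_\cA}$ on a dense set and extend by continuity, together with noting that the range of $\fEr$ lands precisely in the subalgebra $C^*(\cE)\subset C^*_r(\cA)$ so that it is genuinely a conditional expectation (and not just a completely positive contraction onto an external target).
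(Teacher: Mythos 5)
Your proposal is correct and is essentially the paper's own argument: the paper derives this proposition directly from Proposition \ref{Psi-fEalt} and its proof, exactly as you do, by reading off $\fEr=\fE_{\mathcal{J}_\cA}\circ\Psi^{-1}$ from the commutative diagram and transporting the faithful conditional expectation $\fE_{\mathcal{J}_\cA}$ through the isomorphism $\Psi$. (You also correctly read the intertwining relation as $\fEr\circ\iota_\cA^{\rm red}=\iota_\cE\circ\fE$, which is what the construction of $\fEr'$ actually gives; the $\iota_\cA$ in the statement is a typo.)
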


Since $\mathcal{I}_\cA\subset\mathcal{J}_\cA$, there is a natural surjective $*$-homomorphism $p_\cA$ from $\CrAlt{\cA}= \CrKS{\cA}/\mathcal{I}_\cA$ onto $\CrKS{\cA}/\mathcal{J}_\cA$. Using Propositions \ref{Psi-fEalt} and \ref{faith-cond},  the relationship between the reduced $C^*$-algebra $\CrAlt{\cA}$
introduced in the present article and Exel's $C^*_r(\cA)$ can be described as follows:

\begin{theorem} \label{Alt-Exel}
There exists a surjective canonical $*$-homomorphism \[\Psi':\CrAlt{\cA}\to C^*_r(\cA)\] satisfying $\Psi'\circ \Psi_{\Lambda^{\rm alt}} = \Psi_{\Pi^{\rm red}}$. 

Moreover, the  conditional expectation $\fEr^{\rm alt}:\CrAlt{\cA}\to\CrAlt{\cE}=C^*(\cE)$ given by $\fEr^{\rm alt} = \fEr\circ \Psi'$ is canonical in the sense that \[\fEr^{\rm alt}\big( \Phi^\cA_{\Lambda^{\rm alt}}(g)\big)
= \Phi^\cE_{\Lambda^{\rm alt}}(\fE(g))\] for all $g \in C_c(\cA)$, and the following conditions are equivalent:
\begin{itemize}
\item[(i)] $\Psi'$ is an isomorphism;

\smallskip \item[(ii)] $\mathcal{I}_\cA=\mathcal{J}_\cA$;

\smallskip \item[(iii)] $\fEr^{\rm alt}$ is faithful.
 \end{itemize}
\end{theorem}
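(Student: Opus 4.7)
My plan is to assemble the theorem from the machinery built up in Propositions \ref{Psi-fEalt} and \ref{faith-cond}, with only bookkeeping beyond that.

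First I would construct $\Psi'$. Since $\mathcal{I}_\cA \subset \mathcal{J}_\cA$, the inclusion induces a natural surjective $*$-homomorphism $p_\cA:\CrAlt{\cA}=\CrKS{\cA}/\mathcal{I}_\cA\to\CrKS{\cA}/\mathcal{J}_\cA$, and I would simply set $\Psi':=\Psi\circ p_\cA$, where $\Psi$ is the isomorphism of Proposition \ref{Psi-fEalt}. Surjectivity is immediate. To verify $\Psi'\circ\Psi_{\Lambda^{\rm alt}}=\Psi_{\Pi^{\rm red}}$, both sides are $*$-homomorphisms out of $C^*(\cA)$, so by the universal property it suffices to test them on generators $\pi^{\cA}_u(a_u)$. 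Unwinding the definitions and using that $\Phi^\cA_{\Lambda^{\rm alt}}=q^r_\cA\circ\Phi^\cA_\Lambda$ and $p_\cA\circ q^r_\cA=q_{\mathcal{J}_\cA}$, I would get
\[\Psi'\big(\Psi_{\Lambda^{\rm alt}}(\pi^{\cA}_u(a_u))\big)=\Psi\big(q_{\mathcal{J}_\cA}(\Phi^\cA_\Lambda(a_u\delta_u))\big)=\iota^{\rm red}_\cA(a_u\delta_u)=\pi^{\rm red}_u(a_u),\]
the middle equality being the defining property of $\Psi$.

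Next I would establish the canonicity of $\fEr^{\rm alt}$. The identity $\fEr^{\rm alt}\circ\Phi^\cA_{\Lambda^{\rm alt}}=\Phi^\cE_{\Lambda^{\rm alt}}\circ\fE$ on $C_c(\cA)$ should be read off the big commutative diagram in the proof of Proposition \ref{Psi-fEalt}: indeed, by construction $\fEr\circ\iota^{\rm red}_\cA=\iota_\cE\circ\fE$ (Proposition \ref{faith-cond}), and under the identification $\CrAlt{\cE}\simeq C^*(\cE)$ (Corollary following Proposition \ref{prop:universalKS}), $\iota_\cE$ corresponds to $\Phi^\cE_{\Lambda^{\rm alt}}$ on $\csp{\cE}$. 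Composing with $\Psi'\circ\Phi^\cA_{\Lambda^{\rm alt}}=\Psi\circ q_{\mathcal{J}_\cA}\circ\Phi^\cA_\Lambda=\iota^{\rm red}_\cA$ (on $\csp{\cA}$) gives the claimed formula. The fact that $\fEr^{\rm alt}$ is a conditional expectation onto the subalgebra $\CrAlt{\cE}$ follows because it is a composition of two such maps (or directly by Tomiyama, noting that it restricts to the identity on $\CrAlt{\cE}\subset\CrAlt{\cA}$, using the embedding from Lemma \ref{lem:diagembedding}).

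Finally I would prove the equivalences. For (i)$\Leftrightarrow$(ii): since $\Psi$ is an isomorphism, $\Psi'=\Psi\circ p_\cA$ is an isomorphism if and only if $p_\cA$ is an isomorphism, which holds if and only if $\mathcal{I}_\cA=\mathcal{J}_\cA$. For (i)$\Rightarrow$(iii): if $\Psi'$ is an isomorphism, then $\fEr^{\rm alt}=\fEr\circ\Psi'$ is faithful because $\Psi'$ is injective and $\fEr$ is faithful by Proposition \ref{faith-cond}. For (iii)$\Rightarrow$(i): suppose $\fEr^{\rm alt}$ is faithful and let $x\in\CrAlt{\cA}$ with $\Psi'(x)=0$; then $\fEr^{\rm alt}(x^*x)=\fEr(\Psi'(x)^*\Psi'(x))=0$, so faithfulness of $\fEr^{\rm alt}$ forces $x^*x=0$, hence $x=0$, whence $\Psi'$ is injective and thus an isomorphism.

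I do not expect any serious obstacle; the main point is simply to trace carefully through several identifications (in particular $\CrAlt{\cE}\simeq C^*(\cE)$ and the diagram from Proposition \ref{Psi-fEalt}) so that the composition $\fEr\circ\Psi'$ is seen to be intrinsically defined via $\Phi^\cA_{\Lambda^{\rm alt}}$ and $\fE$. If anything requires care, it is verifying that the conditional expectation $\fEr^{\rm alt}$ really takes values in (the image of) $\CrAlt{\cE}$ and is a genuine conditional expectation rather than just a positive map, but this is forced by the factorization through $\fEr$ and the embedding of Lemma \ref{lem:diagembedding}.
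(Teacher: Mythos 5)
Your proposal is correct and follows essentially the same route as the paper, whose entire proof is to set $\Psi'=\Psi\circ p_\cA$ and observe that $\fEr^{\rm alt}=\fE_{\mathcal{J}_\cA}\circ p_\cA$; your verifications of the intertwining relation on generators, the canonicity of $\fEr^{\rm alt}$, and the three equivalences are just the details the authors leave implicit.
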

\begin{proof} It suffices to set $\Psi' = \Psi \circ p_\cA$ and observe that $\fEr^{\rm alt} = \fE_{\mathcal{J}_\cA}\circ p_\cA$.
\end{proof}

We don't know whether $\CrAlt{\cA}$ is isomorphic to $C^*_r(\cA)$ in general. When $S$ is strongly $E^*$-unitary this happens quite often.

\begin{corollary}
Assume $S$ is strongly $E^*$-unitary and let $\sigma:S^\times\to G$ be an idempotent pure grading into a group $G$. Let $\mathcal{B}$ be the associated Fell bundle over $G$ defined in Remark~\ref{groupbundle}. If  $G$ is exact \cite{brown_ozawa08}, or if $\mathcal{B}$ satisfies Exel's approximation property \cite{exel97},
then  $\CrAlt{\cA}$ is canonically isomorphic to $ C^*_r(\cA)$.
\end{corollary}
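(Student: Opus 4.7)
The plan is to reduce to a statement about Fell bundles over the group $G$, using the identification $\CrKS{\cA}\simeq C^*_r(\mathcal{B})$ provided by Remark~\ref{groupbundle}, where $\mathcal{B}=\{B_g\}_{g\in G}$ with $B_{1_G}=\CrKS{\cE}$. By Theorem~\ref{Alt-Exel}, it suffices to show that $\fEr^{\rm alt}$ is faithful, equivalently that $\mathcal{I}_\cA=\mathcal{J}_\cA$.

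The first step is to verify that $\mathcal{I}_\cA$ is a \emph{graded} ideal of $C^*_r(\mathcal{B})$. This follows from observing that the spanning elements $a_s\delta_s-j_{t,s}(a_s)\delta_t$ of $\cN_\cA$ have $s\leq t$ in $S^\times$, and writing $s=et$ with $e\in E$ gives $\sigma(s)=\sigma(e)\sigma(t)=\sigma(t)$ (as $\sigma$ is idempotent pure), so each such generator lies in a single fibre $B_{\sigma(t)}$. Setting $\mathcal{J}_g:=\mathcal{I}_\cA\cap B_g$ thus defines an ideal Fell bundle $\mathcal{J}\subset\mathcal{B}$, and one checks that $\mathcal{J}_{1_G}=\mathcal{I}_\cA\cap\CrKS{\cE}=\mathcal{I}_\cE$, since the generators whose image lies in $B_{1_G}$ are precisely those with $s,t\in E$, which span exactly $\cN_\cE$. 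Hence $(\mathcal{B}/\mathcal{J})_{1_G}\simeq\CrKS{\cE}/\mathcal{I}_\cE=\CrAlt{\cE}=C^*(\cE)$.

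The crucial step is to prove that the canonical surjection
\[
C^*_r(\mathcal{B})/\mathcal{I}_\cA\twoheadrightarrow C^*_r(\mathcal{B}/\mathcal{J})
\]
is an isomorphism under either hypothesis. When $G$ is exact, this follows from exactness of the sequence $0\to C^*_r(\mathcal{J})\to C^*_r(\mathcal{B})\to C^*_r(\mathcal{B}/\mathcal{J})\to 0$ for Fell bundle ideals over exact discrete groups (cf.~\cite{brown_ozawa08}), together with the identification $\mathcal{I}_\cA=C^*_r(\mathcal{J})$ coming from the gradedness above. When $\mathcal{B}$ has Exel's approximation property, the approximating net descends fibrewise to $\mathcal{B}/\mathcal{J}$; then by \cite{exel97} both equalities $C^*(\mathcal{B})=C^*_r(\mathcal{B})$ and $C^*(\mathcal{B}/\mathcal{J})=C^*_r(\mathcal{B}/\mathcal{J})$ hold, and combining these with the standard identity $C^*(\mathcal{B})/\mathcal{I}_\cA\simeq C^*(\mathcal{B}/\mathcal{J})$, valid for any graded ideal of a full cross sectional algebra, yields the claim.

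Once $\CrAlt{\cA}\simeq C^*_r(\mathcal{B}/\mathcal{J})$ is established, the canonical conditional expectation from $C^*_r(\mathcal{B}/\mathcal{J})$ onto its unit fibre $(\mathcal{B}/\mathcal{J})_{1_G}\simeq C^*(\cE)$ is faithful by the general theory of reduced cross sectional $C^*$-algebras of Fell bundles over discrete groups. A direct check on the dense subalgebra $\Phi^\cA_{\Lambda^{\rm alt}}(\csp{\cA})$, using the defining formula $\fEr^{\rm alt}\big(\Phi^\cA_{\Lambda^{\rm alt}}(g)\big)=\Phi^\cE_{\Lambda^{\rm alt}}(\fE(g))$ from Theorem~\ref{Alt-Exel}, identifies this faithful expectation with $\fEr^{\rm alt}$. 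Hence $\fEr^{\rm alt}$ is faithful and Theorem~\ref{Alt-Exel}(iii) gives that $\Psi'$ is an isomorphism. The main obstacle I anticipate is invoking the precise ``quotient descends to the reduced algebra'' statement in the exact-group case, since this is more delicate than its full-algebra counterpart; by contrast, the gradedness of $\mathcal{I}_\cA$ and the matching of conditional expectations are comparatively routine.
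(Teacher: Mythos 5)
Your argument is correct and follows essentially the same route as the paper: both identify $\CrKS{\cA}$ with $C^*_r(\mathcal{B})$ via Remark~\ref{groupbundle}, use exactness of $G$ (resp.\ the approximation property) to force $\mathcal{I}_\cA=\mathcal{J}_\cA$, i.e.\ faithfulness of $\fEr^{\rm alt}$, and then conclude by Theorem~\ref{Alt-Exel}. The only difference is that the paper delegates the graded-ideal bookkeeping you carry out by hand to a direct citation of \cite[Theorem 23.7]{exel14} and \cite[Proposition 23.6]{exel14}.
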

\begin{proof}
By using  \cite[Theorem 23.7]{exel14} if $G$ is exact, or  \cite[Proposition 23.6]{exel14} if $\mathcal{B}$ satisfies Exel's approximation property, one deduces easily that $\mathcal{I}_\cA=\mathcal{J}_\cA$ after making appropriate identifications of these ideals in $C^*_r(\mathcal{B})$. Hence, the result follows from Theorem \ref{Alt-Exel}.
\end{proof}

\bibliographystyle{plain}
\bibliography{bibliography}

\begin{thebibliography}{10}

\bibitem{blackadar}
B.~Blackadar.
\newblock {\em Operator algebras}, volume 122 of {\em Encyclopaedia of
  Mathematical Sciences}.
\newblock Springer-Verlag, Berlin, 2006.
\newblock Theory of $C{^{*}}$-algebras and von Neumann algebras, Operator
  Algebras and Non-commutative Geometry, III.

\bibitem{brown_ozawa08}
Nathanial~P. Brown and Narutaka Ozawa.
\newblock {\em {$C^*$}-algebras and finite-dimensional approximations},
  volume~88 of {\em Graduate Studies in Mathematics}.
\newblock American Mathematical Society, Providence, RI, 2008.

\bibitem{fleming_fountain_gould99}
Sydney Bulman-Fleming, John Fountain, and Victoria Gould.
\newblock Inverse semigroups with zero: covers and their structure.
\newblock {\em J. Austral. Math. Soc. Ser. A}, 67(1):15--30, 1999.

\bibitem{buss_exel11}
Alcides Buss and Ruy Exel.
\newblock Twisted actions and regular {F}ell bundles over inverse semigroups.
\newblock {\em Proc. Lond. Math. Soc. (3)}, 103(2):235--270, 2011.

\bibitem{buss_exel_12a}
Alcides Buss and Ruy Exel.
\newblock Fell bundles over inverse semigroups and twisted \'etale groupoids.
\newblock {\em J. Operator Theory}, 67(1):153--205, 2012.

\bibitem{buss_exel12}
Alcides Buss and Ruy Exel.
\newblock Inverse semigroup expansions and their actions on {$C^*$}-algebras.
\newblock {\em Illinois J. Math.}, 56(4):1185--1212, 2012.

\bibitem{buss_exel_meyer15}
Alcides Buss, Ruy Exel, and Ralf Meyer.
\newblock Reduced ${C}^*$-algebras of {F}ell bundles over inverse semigroups.
\newblock 2015.
\newblock Preprint arXiv:1512.05570v1.

\bibitem{buss_meyer14}
Alcides Buss and Ralf Meyer.
\newblock Inverse semigroups actions on groupoids.
\newblock 2014.
\newblock Preprint arXiv:1410.2051, to appear in Rocky Mountain J. Math.

\bibitem{exel97}
Ruy Exel.
\newblock Amenability for {F}ell bundles.
\newblock {\em J. Reine Angew. Math.}, 492:41--73, 1997.

\bibitem{exel11}
Ruy Exel.
\newblock Noncommutative {C}artan subalgebras of {$C^*$}-algebras.
\newblock {\em New York J. Math.}, 17:331--382, 2011.

\bibitem{exel14}
Ruy Exel.
\newblock {\em Partial Dynamical Systems, Fell Bundles and Applications}.
\newblock 350pp, arXiv:1511.04565v1. Also available online from www.
  mtm.ufsc.br/~exel/publications/., 2014.

\bibitem{hewitt_ross70}
Edwin Hewitt and Kenneth~A. Ross.
\newblock {\em Abstract harmonic analysis. {V}ol. {II}: {S}tructure and
  analysis for compact groups. {A}nalysis on locally compact {A}belian groups}.
\newblock Die Grundlehren der mathematischen Wissenschaften, Band 152.
  Springer-Verlag, New York-Berlin, 1970.

\bibitem{khoshkam_skandalis04}
Mahmood Khoshkam and Georges Skandalis.
\newblock Crossed products of {$C^*$}-algebras by groupoids and inverse
  semigroups.
\newblock {\em J. Operator Theory}, 51(2):255--279, 2004.

\bibitem{lance95}
E.~C. Lance.
\newblock {\em Hilbert {$C^*$}-modules}, volume 210 of {\em London Mathematical
  Society Lecture Note Series}.
\newblock Cambridge University Press, Cambridge, 1995.
\newblock A toolkit for operator algebraists.

\bibitem{lawson}
Mark~V. Lawson.
\newblock {\em Inverse semigroups}.
\newblock World Scientific Publishing Co., Inc., River Edge, NJ, 1998.
\newblock The theory of partial symmetries.

\bibitem{paterson}
Alan L.~T. Paterson.
\newblock {\em Groupoids, inverse semigroups, and their operator algebras},
  volume 170 of {\em Progress in Mathematics}.
\newblock Birkh\"auser Boston, Inc., Boston, MA, 1999.

\bibitem{re80}
Jean Renault.
\newblock {\em A groupoid approach to {$C^{\ast} $}-algebras}, volume 793 of
  {\em Lecture Notes in Mathematics}.
\newblock Springer, Berlin, 1980.

\bibitem{sieben}
N{\'a}ndor Sieben.
\newblock {$C^\ast$}-crossed products by partial actions and actions of inverse
  semigroups.
\newblock {\em J. Austral. Math. Soc. Ser. A}, 63(1):32--46, 1997.

\end{thebibliography}

\end{document}